\begin{document}
  
\setenumerate{label=(\alph*), font=\normalfont} 
 
 \newtheorem{thm}{Theorem}[section] 
 \newtheorem{prop}[thm]{Proposition} 
 \newtheorem{lem}[thm]{Lemma} 
 \newtheorem{cor}[thm]{Corollary} 
 \newtheorem{conj}[thm]{Conjecture} 
\newtheorem{question}[thm]{Question} 
  
 \theoremstyle{definition} 
 \newtheorem{example}[thm]{Example} 
 \newtheorem{defn}[thm]{Definition} 
 \newtheorem{remark}[thm]{Remark} 
 
\numberwithin{equation}{section} 
 
\newcommand{\bbN}{{\mathbb{N}}} 
\newcommand{\bbZ}{{\mathbb{Z}}} 
\newcommand{\bbR}{{\mathbb{R}}} 
\newcommand{\bbP}{{\mathbb{P}}} 
\newcommand{\bbA}{{\mathbb{A}}} 
\newcommand{\bbG}{{\mathbb{G}}} 
\newcommand{\bbC}{{\mathbb{C}}} 
\newcommand{\bbF}{{\mathbb{F}}} 
\newcommand{\bbQ}{{\mathbb{Q}}} 
\newcommand{\Span}{\operatorname{span}} 
\newcommand{\Char}{\operatorname{char}} 
\newcommand{\Sp}{\operatorname{Sp}} 
\newcommand{\diag}{\operatorname{diag}} 
\newcommand{\GL}{\operatorname{GL}} 
\newcommand{\Mat}{\operatorname{M}} 
\newcommand{\SL}{\operatorname{SL}} 
\newcommand{\PGL}{\operatorname{PGL}} 
\newcommand{\PSL}{\operatorname{PSL}} 
\newcommand{\Orth}{\operatorname{O}} 
\newcommand{\Ind}{\operatorname{Ind}} 
\newcommand{\Stab}{\operatorname{Stab}} 
\newcommand{\Aut}{\operatorname{Aut}} 
\newcommand{\Hom}{\operatorname{Hom}} 
\newcommand{\End}{\operatorname{End}} 
\newcommand{\Res}{\operatorname{Res}} 
\newcommand{\Spec}{\operatorname{Spec}} 
\newcommand{\rank}{\operatorname{rank}} 
\newcommand{\pr}{\operatorname{pr}} 
\newcommand{\id}{\operatorname{id}} 
\newcommand{\ed}{\operatorname{ed}} 
\newcommand{\rdim}{\operatorname{rdim}} 
\newcommand{\adim}{\mathbf{a}} 
\newcommand{\ST}{\operatorname{ST}} 
\newcommand{\pmed}{\operatorname{pmed}} 
\newcommand{\Xred}{\operatorname{X_{\rm red}}}
\newcommand{\trdeg}{\operatorname{trdeg}} 
\newcommand{\Gal}{\operatorname{Gal}} 
\newcommand{\Sch}{\operatorname{Sch}} 
\newcommand{\im}{\operatorname{im}} 
\newcommand{\Id}{\operatorname{Id}} 
\newcommand{\Pic}{\operatorname{Pic}} 
\newcommand{\Ext}{\operatorname{Ext}} 
\newcommand{\Sym}{\operatorname{S}} 
\newcommand{\Alt}{\operatorname{A}} 
\newcommand{\calA}{\mathcal{A}} 
\newcommand{\calB}{\mathcal{B}} 
\newcommand{\calC}{\mathcal{C}} 
\newcommand{\calF}{\mathcal{F}} 
\newcommand{\calG}{\mathcal{G}} 
\newcommand{\calI}{\mathcal{I}} 
\newcommand{\calL}{\mathcal{L}} 
\newcommand{\calP}{\mathcal{P}} 
\newcommand{\calR}{\mathcal{R}} 
\newcommand{\calS}{\mathcal{S}} 
\newcommand{\calX}{\mathcal{X}} 
\newcommand{\calY}{\mathcal{Y}} 
\newcommand{\calT}{\mathcal{T}} 
\newcommand{\Fields}{\mathbf{Fields}} 
\newcommand{\Var}{\mathbf{Var}} 
\newcommand{\gVar}{\textrm{-}\mathbf{Var}} 
\newcommand{\biratarrow}{\stackrel{\sim}{\dasharrow}} 
\newcommand{\Gr}{\operatorname{Gr}}  
\newcommand{\Yfnote}[2]{#1^{#2}}
 
\title{Pseudo-reflection groups and essential dimension} 
 
\author{Alexander Duncan} 
\address{Alexander Duncan\newline 
Department of Mathematics, University of Michigan, 
Ann Arbor, MI 48109, USA}
\thanks{A. Duncan was partially supported by
National Science Foundation
RTG grants DMS 0838697 and DMS 0943832.}
 
\author{Zinovy Reichstein} 
\address{Zinovy Reichstein\newline 
Department of Mathematics, University of British Columbia, 
Vancouver, BC V6T 1Z2, Canada}
\thanks{Z. Reichstein was partially supported by 
National Sciences and Engineering Research Council of
Canada Discovery grant 250217-2012.}
 
\subjclass[2010]{20F55, 20D15}
% 20D15 Nilpotent groups, $p$-groups
% 20F55 Reflection and Coxeter groups
\keywords{Essential dimension, pseudo-reflection group, $p$-group}
 
\begin{abstract}
We give a simple formula for the essential dimension 
of a finite pseudo-reflection group at a prime $p$  
and determine the absolute essential dimension 
for most irreducible pseudo-reflection groups.  We also 
study the ``poor man's essential dimension" of an arbitrary
finite group, an intermediate notion between the absolute 
essential dimension and the essential dimension at a prime $p$.
\end{abstract} 

\maketitle

\section{Introduction}

Let $k$ be a field and $G$ be a finite group. We begin by recalling 
the definition of the essential dimension $\ed_k(G)$.

A $G$-variety is a $k$-variety $X$ with a $G$-action.
A $G$-variety $X$ is \emph{primitive} if 
$G$ acts transitively on the irreducible 
components of $X_{\overline{k}}$. Here $\overline{k}$ denotes
the algebraic closure of $k$.  A \emph{compression} 
is a dominant $G$-equivariant $k$-map $X \dasharrow Y$,
where $X$ and $Y$ are primitive faithful $G$-varieties defined over $k$.
The essential dimension of a primitive faithful $G$-variety $X$,
denoted by $\ed(X)$, is defined as the minimal dimension
of $Y$, where $X$ is fixed, $Y$ varies, and
the minimum is taken over all compressions $X \dasharrow Y$.
The essential dimension $\ed_k(G)$ of $G$ is the maximal value of $\ed(X)$
over all primitive faithful $G$-varieties $X$ defined over $k$. 
This maximal value is attained 
in the case where $X = V$ is a finite-dimensional $k$-vector space 
on which $G$ acts
via a faithful representation $G \hookrightarrow \GL(V)$.
We will denote this numerical invariant of $G$ by $\ed_k(G)$,
or simply $\ed(G)$ when the base field $k$ is clear.

The notion of essential dimension has classical origins, even though 
it was only formalized relatively recently~\cite{bur}. In particular, 
Felix Klein showed (using different terminology) that
$\ed_{\bbC}(\Sym_5)=2$ in his 1884 book~\cite{fklein}.
In Galois-theoretic language, $\ed_k(G)$ is the minimal integer 
$d \geq 0$ such that for every field $K/k$ and every G-Galois 
field extension $L/K$, one can write $L \simeq K[x]/(f(x))$, 
where at most $d$ of the coefficients of the polynomial $f(x) \in K[x]$ 
are algebraically independent over $k$. This number naturally comes 
up in the construction of so-called ``generic polynomials" for the group $G$
in inverse Galois theory; see~\cite[Chapter 8]{jly}.
Essential dimension can also be defined in a broader context
as a numerical invariant of more general algebraic objects. 
In this paper our focus will be solely on finite groups. 
For surveys of the broader theory, we refer an interested reader 
to~\cite{icm,whatis,merkurjev}. 

The essential dimension has turned out to 
be surprisingly difficult to compute for many finite groups. 
For example, the exact value of
$\ed_{\bbQ}(\bbZ/n \bbZ)$ is only known for 
a few small values of $n$. The relative version of essential dimension
at a prime integer $p$, denoted by $\ed(G; p)$, has proved to 
be more accessible. If $X$ is a primitive faithful $G$-variety,
$\ed(X; p)$ is defined as the minimum of $\dim(Y)$ over 
all primitive faithful $G$-varieties $Y$ which admit a $G$-equivariant 
correspondence $X \rightsquigarrow Y$ of degree prime to $p$.
The essential dimension $\ed(G; p)$ is, once again, defined as
the minimal value of $\ed(X; p)$.
Recall that a correspondence $X \rightsquigarrow Y$ of degree $1$ is
the same thing as a dominant rational map $X \dasharrow Y$. 
Thus $\ed(X; p) \leqslant
\ed(X)$ and $\ed(G; p) \leqslant \ed(G)$ for every prime $p$.
The best known lower bound for $\ed(G)$ is usually deduced from
this inequality.

The computation of $\ed(G; p)$ is greatly facilitated by a 
theorem of N.~A.~Karpenko and A.~S.~Merkurjev~\cite{km}, which
asserts that 
\begin{equation} \label{e.km}
\ed(G; p) = \ed(G_p) = \rdim(G_p)  \, . 
\end{equation}
Here $G_p$ is any Sylow $p$-subgroup of $G$, and
for a finite group $H$, $\rdim(H)$ denotes 
the minimal dimension of a faithful representation of $H$ defined over $k$,
and we assume that $\zeta_p \in k$, where $\zeta_p$ is 
a primitive $p$th root of unity.
Note that since $[k(\zeta_p):k]$ is prime to $p$,
$\ed_k(G; p) = \ed_{k(\zeta_p)}(G; p)$.

The case where $G = \Sym_n$ is the symmetric
group is of particular interest because it relates to classical 
questions in the theory of polynomials; see~\cite{bur,bur2}. 
Here the relative essential dimension is known exactly for every prime
$p$,
\begin{equation} \label{e.ed-S_n-at-p}
\ed(\Sym_n; p) = \left\lfloor \dfrac{n}{p} \right\rfloor; 
\end{equation} 
see~\cite[Corollary 4.2]{mr09}.
The absolute essential dimension 
$\ed(\Sym_n)$ is largely unknown.
In characteristic zero we know only that
\begin{equation} \label{e.strict}
\max_p \, \ed(\Sym_n; p) = 
\left\lfloor \dfrac{n}{2}  \right\rfloor \leqslant 
\left\lfloor \dfrac{n+1}{2} \right\rfloor \leqslant \ed(\Sym_n) 
\leqslant n - 3 
\end{equation}
for any $n \geqslant 6$; see~\cite{bur}, \cite{duncan} and \cite{mm}.
We know even less about $\ed(\Sym_n)$ in prime characteristic.
 
The symmetric groups $\Sym_n$ belong to the larger family of pseudo-reflection
groups. Pseudo-reflection groups play an important role in representation 
theory and invariant theory of finite groups;
see, e.g.~\cite{kane,LTbook,SheTod54Finite}.
It is thus natural to try to compute $\ed(G)$ and $\ed(G; p)$, where
$G$ is a finite pseudo-reflection group, and $p$ is a prime.  The first steps
in this direction were taken by M.~MacDonald~\cite[Section 5.1]{mm}, who
computed $\ed(G; p)$ for all primes $p$ and all irreducible Weyl groups
$G$.  He also computed $\ed(G)$ for every irreducible Weyl group $G$,
except for $G = \Sym_n$ and $G = W({\bf E_6})$, 
the Weyl group of the root system of type ${\bf E_6}$.  
His proofs are based on case-by-case analysis.

The aim of this paper is twofold. First, we will generalize MacDonald's
results to all finite pseudo-reflection groups except the symmetric groups,
with a more uniform statement and proof.
Second, we will investigate a new intermediate notion between
$\max_p \, \ed(G; p)$ and $\ed(G)$, which we call ``poor man's 
essential dimension.''  

Throughout this paper we will
assume that $\Char(k)$ does not divide the order of $G$.
Our finite groups will be viewed as split algebraic groups over $k$.  
We will denote by $\bar{k}$ the algebraic closure of $k$ and by $\zeta_d$ 
a primitive $d$th root of unity in $\bar{k}$ where $d$ is a positive 
integer coprime to $\Char(k)$.  By a \emph{variety} we will mean 
a separated reduced scheme of finite type over $k$, not necessarily 
irreducible.  We will also adopt the following notational conventions
inspired by~\cite{Spr74Regular}.
Let $\phi \colon G \hookrightarrow \GL(V)$ be
a faithful representation of $G$ and $m$ be a positive integer
prime to the characteristic of $k$.
Set $V(g, \zeta_m) := \ker(\zeta_m I - \phi(g))$ to be
the $\zeta_m$-eigenspace of $g$ and let
\[
a_{\phi}(m) := \max_{g \in G} \, \dim V(g,\zeta_m) \, .
\]
Note that $V(g, \zeta_m)$ is defined over $k(\zeta_m)$ but may 
not be defined over $k$.
Replacing $g$ by a suitable power, we see that
$a_{\phi}(m)$ depends only on $\phi$ and $m$ and not 
on the choice of the primitive $m$th root of unity $\zeta_m$.
If the reference to $\phi$ is clear from the context, we will 
write $g$ in place of $\phi(g)$ and $a(m)$ in place of $a_{\phi}(m)$.
By convention, we set $a(m)=0$ if $m$ is a multiple of the
characteristic of $k$.

Recall that an element $g \in \GL(V)$ is a \emph{pseudo-reflection} if
it is conjugate to a diagonal matrix of the form 
$\diag(1, \dots, 1, \zeta)$, where $\zeta \ne 1$ is a root of unity.

\begin{thm} \label{thm.horizontal1} 
Let $G$ be a finite subgroup of $\GL(V)$.  Assume that the characteristic 
of the base field $k$ does not divide $|G|$.  Then

\smallskip 
(a) $\ed(G; p) \leqslant a(p)$ for every prime $p$. 

\smallskip
(b) Moreover, if $G$ is generated by pseudo-reflections then
$\ed(G; p) = a(p)$ for every prime $p$. 
\end{thm}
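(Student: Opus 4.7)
The plan is to apply the Karpenko--Merkurjev formula~\eqref{e.km} to reduce both parts to questions about the Sylow $p$-subgroup $P:=G_p$. Since $\ed(G;p)=\ed_{k(\zeta_p)}(G;p)$, we may pass to $k(\zeta_p)$ and assume $\zeta_p\in k$, and then~\eqref{e.km} identifies $\ed(G;p)$ with $\rdim(P)$. Thus (a) becomes $\rdim(P)\le a(p)$ and (b) becomes the matching lower bound $\rdim(P)\ge a(p)$ under the pseudo-reflection hypothesis.

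For part~(a), I would exhibit an explicit faithful $P$-subrepresentation of $V$ of dimension at most $a(p)$. Set $Z:=Z(P)[p]$, the $p$-torsion of the center of $P$, which is elementary abelian of some rank $r$. Because every nontrivial normal subgroup of a $p$-group meets the center, a $P$-representation is faithful if and only if its restriction to $Z$ is faithful. Decompose $V|_P=\bigoplus_i V_i$ into irreducibles; each $V_i|_Z$ is the scalar action of some central character $\chi_i\in Z^*\cong \bbF_p^r$, and faithfulness of $V|_P$ forces $\{\chi_i\}$ to span $Z^*$ over $\bbF_p$. Choose an $\bbF_p$-basis $\chi_{i_1},\ldots,\chi_{i_r}$ among the nontrivial $\chi_i$, and let $z\in Z$ be the unique element satisfying $\chi_{i_j}(z)=\zeta_p$ for every $j$ (this $z$ exists and is unique because $\{\chi_{i_j}\}$ is a basis of the dual). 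Then $V(z,\zeta_p)$ is $P$-stable (as $z$ is central) and contains the faithful $P$-subrepresentation $V_{i_1}\oplus\cdots\oplus V_{i_r}$, so $\rdim(P)\le\dim V(z,\zeta_p)\le a(p)$ by the definition of $a(p)$.

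For part~(b), it suffices to produce an elementary abelian $p$-subgroup $A\le G$ of rank $a(p)$. Indeed, any faithful $P$-representation restricts faithfully to $A$, and $\rdim((\bbZ/p)^{a(p)})=a(p)$ over a field containing $\zeta_p$, yielding $\rdim(P)\ge a(p)$. To construct $A$, I would take $g\in G$ achieving $\dim V(g,\zeta_p)=a(p)$ and invoke Springer's theory of regular elements~\cite{Spr74Regular} and its Lehrer--Springer generalization to non-regular roots of unity: this identifies $N_G(V(g,\zeta_p))/C_G(V(g,\zeta_p))$ with a pseudo-reflection group of rank $a(p)$ acting on $V(g,\zeta_p)$, whose fundamental degrees are precisely the degrees of $G$ divisible by $p$.

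The main obstacle is the final step of~(b): passing from this pseudo-reflection subquotient back down to a copy of $(\bbZ/p)^{a(p)}$ inside $G$ itself. A uniform argument here seems to require the Shephard--Todd classification, since every pseudo-reflection group of rank $a(p)$ whose fundamental degrees are all divisible by $p$ visibly contains a diagonal $(\bbZ/p)^{a(p)}$ in its natural representation (the model case being the monomial group $G(p,1,a(p))$), which can then be lifted to the stabilizer $N_G(V(g,\zeta_p))\subseteq G$. This lifting step, and the verification that the classification indeed produces the required elementary abelian subgroup in every case, is what genuinely uses the pseudo-reflection hypothesis and replaces MacDonald's case-by-case analysis for Weyl groups.
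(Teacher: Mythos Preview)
Your argument for part~(a) is essentially the paper's: it proves the same Lemma~\ref{lem.main1} (choose a central $g\in G_p$ of order $p$ so that a minimal faithful $G_p$-subrepresentation sits inside $V(g,\zeta_p)$) and deduces $\ed(G;p)\le a(p)$ in the same way.

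Your approach to part~(b), however, has a genuine gap. You reduce to finding an elementary abelian subgroup $(\bbZ/p)^{a(p)}$ inside $G$, claiming that ``every pseudo-reflection group of rank $a(p)$ whose fundamental degrees are all divisible by $p$ visibly contains a diagonal $(\bbZ/p)^{a(p)}$.'' This is false. Take $G=\ST_4$ (degrees $4$ and $6$) at $p=2$: here $a(2)=2$, and the maximal eigenspace $V(g,-1)$ is all of $V$, so the Lehrer--Springer subquotient is $G$ itself. But $\ST_4\cong\mathrm{SL}_2(\bbF_3)$ has Sylow $2$-subgroup $Q_8$, which contains a \emph{unique} involution and hence no copy of $(\bbZ/2)^2$. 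The theorem still holds because $\rdim(Q_8)=2$, but this comes from $Q_8$ being non-abelian, not from a large elementary abelian subgroup. So the lower bound $\rdim(G_p)\ge a(p)$ cannot in general be witnessed by an elementary abelian subgroup, and no amount of case-checking will produce one.

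The paper proceeds quite differently for~(b). It studies a minimal counterexample $\phi\colon G\hookrightarrow\GL(V)$ and, using the Lehrer--Michel and Steinberg results on eigenspace stabilizers together with the Karpenko--Merkurjev theorem, derives a long list of constraints (Proposition~\ref{prop.minimal}): $\phi$ is irreducible, $\zeta_p\id_V\in G$, $G$ has no order-$p$ elements with exactly two eigenvalues (hence no order-$p$ pseudo-reflections), $p\mid\dim V$, and $\dim V\ge 2p$. The last inequality comes from bounding $\ed(G;p)$ below via the central extension $1\to\langle\zeta_p\id_V\rangle\to G\to G/\langle\zeta_p\id_V\rangle\to 1$, which forces $\ed(G;p)\ge p$; this is exactly the kind of non-abelian lower bound (akin to $\rdim(Q_8)=2$) that your elementary-abelian strategy cannot see. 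These constraints already rule out all rank~$\le 3$ groups and most of the infinite families; the remaining handful of exceptional groups are then eliminated one by one.
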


Suppose that $\phi \colon G \hookrightarrow \GL(V)$ is generated 
by pseudo-reflections with $n = \dim(V)$. 
Then $k[V]^G = k[f_1, \dots, f_n]$ for some homogeneous 
polynomials $f_1, \dots, f_n$.  Set $d_i := \deg(f_i)$.
The integers $d_1  \dots, d_n$ are called the {\em degrees of the fundamental 
invariants} of $\phi$. These numbers are uniquely determined by $\phi$ 
up to reordering. They are independent of the choice of $f_1, \ldots, f_n$
and can be recovered directly from the Poincar\'e series of $k[V]^G$;
see, e.g., \cite{kane} or \cite{LTbook}.
T.~A.~Springer~\cite[Theorem 3.4(i)]{Spr74Regular} showed that 
\begin{equation} \label{e.springer}
a(m) = | \{ i \, | \,  \text{$d_i$ is  
divisible by $m$} \} | \, .
\end{equation}
Note that while the base field $k$ is assumed to be the field 
of complex numbers $\bbC$
in~\cite[Theorem 3.4(i)]{Spr74Regular}, the above formula 
remains valid under our less restrictive assumptions on $k$; 
see, e.g.,~\cite[Section 33-1]{kane}.

Complex groups generated by pseudo-reflections have been classified
by  G.~C.~Shephard and J.~A.~Todd~\cite{SheTod54Finite}. 
Their classification lists $d_1, \dots, d_n$ in every case; 
Springer's theorem~\eqref{e.springer} makes it possible to 
read $a(m)$ directly off their table for every $G$ and every $m$.
The same can be done for other base fields $k$, as long  
as $\Char(k)$ does not divide $|G|$; for details and further references, 
see Section~\ref{sect.horizontal1-3}.

\begin{example}
For $G = W({\bf E_8})$ (group number $37$ in 
the Shephard-Todd classification),
the values of $d_1, \dots, d_8$ are \[ \text{$2$, $8$, $12$, $14$, $18$, 
$20$, $24$ and $30$,} \] 
respectively; see, e.g.,~\cite[Appendix D]{LTbook}. 
Counting how many of these numbers are divisible by each prime $p$ 
and applying Theorem~\ref{thm.horizontal1}(b) in combination 
with~\eqref{e.springer},
we recover the following values from~\cite[Table IV]{mm}:

\medskip
\begin{center}
\begin{tabular}{ | c | c | c | c | c | c | }
\hline
p & 2 & 3 & 5 & 7 & $> 7$ \\ 
\hline 
$ \; \; \ed(W({\bf E_8}); p) \; \; $ & $\quad 8 \quad$ & $\quad 4 \quad$ & 
$\quad 2 \quad$ & $\quad 1 \quad$ & $\quad 0 \quad$  \\
\hline
\end{tabular} \, . 
\end{center} 
\end{example}

Our proof of Theorem~\ref{thm.horizontal1} relies on both the
uniform arguments in Section~\ref{sect.horizontal1-1} 
and~\ref{sect.horizontal1-2} and some case-by case analysis
using the Shephard-Todd classification in~Section~\ref{sect.horizontal1-3}.

\medskip
Our next result, Theorem~\ref{thm.main3}, gives the exact value for
the \emph{absolute} essential dimension of all 
irreducible pseudo-reflection groups, except for $\Sym_n$. Recall that,
in the Shephard-Todd classification there are three infinite families:
the symmetric groups, the family $G(m,l,n)$
depending on three integer parameters ($m$, $l$, $n$),
and the cyclic groups.
In addition, there are 34 exceptional groups.

\begin{thm}\label{thm.main3}
Let $G \subset \GL(V)$ be an irreducible representation of
a finite group generated by pseudo-reflections.
Suppose $G$ is not isomorphic to a symmetric group $\Sym_n$
and $\Char(k)$ does not divide $|G|$.
Then  

\smallskip
(a) $\ed(G)= \dim(V) - 2 = 4$, if $G \simeq W({\bf E_6})$,

\smallskip
(b) $\ed(G)= \dim(V)-1 = n - 1$, if $G \simeq G(m,m,n)$ with
$m$, $n$ relatively prime,

\smallskip
(c) $\ed(G) = \dim(V)$ in all other cases.
\end{thm}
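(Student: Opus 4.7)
The upper bound $\ed(G)\leqslant\dim V$ is immediate in all three parts, since $V$ is itself a primitive faithful $G$-variety.  So part (c) reduces to a matching lower bound, while parts (a) and (b) require work on both sides.  The uniform lower-bound tool is Theorem~\ref{thm.horizontal1}(b) combined with Springer's formula~\eqref{e.springer}: for every prime $p$,
\[
  \ed(G) \;\geqslant\; \ed(G;p) \;=\; a(p) \;=\; \bigl|\{\, i : p \mid d_i \,\}\bigr|,
\]
where $d_1,\dots,d_n$ are the degrees of the fundamental invariants of $G$.

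Part (c) amounts to exhibiting, for each irreducible pseudo-reflection group in its scope, a single prime dividing every $d_i$.  For the infinite family $G(m,\ell,n)$ with $\ell \mid m$ the degrees are $m, 2m, \dots, (n-1)m, nm/\ell$: when $\ell<m$ any prime dividing $m/\ell$ works, and when $\ell=m$ one uses a prime dividing $\gcd(m,n)$, which exists precisely outside the hypothesis of (b).  The cyclic groups $G(m,1,1)$ are handled by any $p\mid m$, and the $33$ exceptional Shephard--Todd groups other than $W({\bf E_6})=G_{35}$ are disposed of one at a time by inspecting their tables of degrees (for instance, the prime $2$ divides every degree of every exceptional Weyl group).

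For part (b), with $G=G(m,m,n)$ and $\gcd(m,n)=1$, any prime $p\mid m$ divides $m,2m,\dots,(n-1)m$ but not $n$, so $a(p)=n-1$ and $\ed(G)\geqslant n-1$.  For the matching upper bound, a scalar matrix $\zeta I$ lies in $G(m,m,n)$ if and only if $\zeta^m=\zeta^n=1$, i.e., $\zeta^{\gcd(m,n)}=1$, which under our hypothesis forces $\zeta=1$.  Hence $G$ injects into $\PGL(V)$, so $\bbP V$ is a primitive faithful $G$-variety of dimension $n-1$, and the natural quotient $V\dasharrow\bbP V$ is a $G$-compression giving $\ed(G)\leqslant n-1$.

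Part (a) carries the main obstacle.  For $G=W({\bf E_6})$ the degrees are $2,5,6,8,9,12$; the prime $p=2$ divides four of them, so $a(2)=4$ and $\ed(G)\geqslant 4$.  The upper bound $\ed(G)\leqslant 4$ is harder because the minimal faithful $W({\bf E_6})$-module has dimension $6$, forcing the compression to land on a non-linear $4$-dimensional $W({\bf E_6})$-variety.  The plan is to exploit the classical action of $W({\bf E_6})$ on the $27$ lines of a smooth cubic surface in $\bbP^3$: since smooth cubic surfaces modulo $\PGL_4$ form a unirational variety of dimension $19-15=4$, and a labelling of the $27$ lines yields a versal $W({\bf E_6})$-torsor over such a base, one obtains a generically free $W({\bf E_6})$-variety $Y$ of dimension $4$ together with a $W({\bf E_6})$-equivariant dominant rational map $V\dasharrow Y$.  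Producing this map explicitly---equivalently, attaching a cubic surface to a generic point of $V/W({\bf E_6})$---and verifying versality is the delicate step on which the whole argument rests.
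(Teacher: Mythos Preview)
Your plan for parts (b) and (c) matches the paper's proof essentially verbatim. For part (a) you have correctly identified both the lower bound (via $a(2)=4$) and the target $4$-dimensional $W({\bf E_6})$-variety; the paper uses the birationally equivalent model $Y=U/\PGL_3$ with $U\subset(\bbP^2)^6$ the locus of six ordered points in general position, which is the same moduli space of marked cubic surfaces viewed from the blow-down side.

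The step you flag as delicate is indeed the heart of the matter, and the paper resolves it (following a suggestion of Dolgachev) via a construction of Pinkham: one fixes the cuspidal cubic $C=\{yz^2=x^3\}\subset\bbP^2$, identifies its smooth locus with $\bbG_a$, and sends a $6$-tuple in $C_{\rm sm}^6\cong\bbA^6$ to its image in $Y$. Dominance follows from a lemma of Shioda asserting that six general points in $\bbP^2$ lie on some cuspidal cubic (and all cuspidal cubics are projectively equivalent). Equivariance for the $\Sym_6$ inside $W({\bf E_6})$ is clear; for the extra generating reflection one uses the standard quadratic Cremona transformation, and Pinkham's observation is that with a suitable normalisation $\bbA^6$ becomes a Cartan subalgebra of type ${\bf E_6}$ carrying the standard Weyl-group action, so the map is $W({\bf E_6})$-equivariant. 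One small correction: once you have a dominant $G$-equivariant rational map $V\dasharrow Y$ with the $G$-action on $Y$ faithful, the bound $\ed(G)\leqslant\dim Y$ is immediate from the definition of essential dimension---versality of $Y$ is a consequence, not something that needs separate verification.
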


As we mentioned above, the exact value of $\ed(\Sym_n)$ is not known;
see~\eqref{e.strict}.  Part (a) answers an open question posed
in~\cite[Remark 5.2]{mm}.  The proof of this part relies 
on a geometric construction suggested to us by I.~Dolgachev.

We now recall that $\ed(G)$ is the minimal dimension of a versal $G$-variety 
and $\ed(G; p)$ is the minimal dimension of a $p$-versal
$G$-variety; see~\cite[Section 5]{gms}~and~\cite[Remark 2.5]{versal}.
\emph{Poor man's essential dimension}, denoted $\pmed(G)$, 
is defined as the minimal dimension of a $G$-variety which is 
simultaneously $p$-versal for every prime $p$. We have 
\begin{equation} \label{e.obvious}
\max_p \, \ed(G; p) \leqslant \pmed(G) \leqslant \ed(G) \, . 
\end{equation}
The term ``poor man's essential dimension'' is meant to suggest that
$\pmed(G)$ is a more accessible substitute for $\ed(G)$. Where exactly 
it fits between $\max_p \, \ed(G; p)$ and $\ed(G)$, is a key 
motivating question for this paper.

\begin{thm} \label{thm.horizontal2} 
Let $G$ be a finite subgroup of $\GL(V)$. Assume that 
$\Char(k)$ does not divide $|G|$.  Then

\smallskip 
(a) $\pmed(G) \leqslant \max_p \, a(p)$.

\smallskip
(b) Moreover, if $G$ is generated by pseudo-reflections then
\[ \pmed(G) = \max_p \, a(p) = \max_p \, \ed(G; p) \ . \]

\smallskip
In both parts the maximum is taken over all prime integers $p$.
\end{thm}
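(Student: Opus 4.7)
Part (b) reduces immediately to part (a): inequality~\eqref{e.obvious} gives $\pmed(G) \geq \max_p \ed(G;p)$, while Theorem~\ref{thm.horizontal1}(b) identifies $\max_p \ed(G;p)$ with $\max_p a(p)$ when $G$ is generated by pseudo-reflections. Combined with the upper bound of (a), these three quantities then coincide.

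For part (a), set $m := \max_p a(p)$, a finite number since $a(p) = 0$ for $p \nmid |G|$. The goal is to produce a primitive faithful $G$-variety $X$ with $\dim X \leq m$ that is $p$-versal for every prime $p$. Since $V$ itself is versal, it is $p$-versal for every $p$, so it suffices to build $X$ together with, for each prime $p$, a $G$-equivariant correspondence $V \rightsquigarrow X$ of degree prime to $p$; such a correspondence transfers $p$-versality from $V$ to $X$.

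My plan is to revisit the construction underlying Theorem~\ref{thm.horizontal1}(a). That argument produces, for each prime $p$ dividing $|G|$, a Sylow $G_p \leq G$ and a faithful $G_p$-invariant subspace $W_p \subseteq V$ of dimension $a(p)$. The induced $G$-variety $Y_p := G \times^{G_p} W_p$ is primitive faithful of dimension $a(p)$, and the $G_p$-equivariant projection $V|_{G_p} \twoheadrightarrow W_p$ induces up to a $G$-equivariant correspondence $V \rightsquigarrow Y_p$ of degree $[G : G_p]$, which is prime to $p$, so that $Y_p$ is $p$-versal. The decisive step is therefore to merge the $Y_p$'s into a single primitive faithful $G$-variety $X$ of dimension $m$ receiving analogous correspondences for every prime at once; note that the disjoint union has dimension $m$ but is not primitive.

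The main obstacle lies precisely in this merging. One natural attempt is to choose the Sylow subgroups and subspaces compatibly so that all $W_p$ lie in a common $m$-dimensional subvariety $\widetilde W \subseteq V$, and then set $X = \widetilde W$ if it is $G$-invariant, or else $X = \bigcup_{g \in G} g \widetilde W$; the existence of such a $\widetilde W$ would exploit the freedom to replace each $W_p$ by a $G$-translate or $G$-conjugate. An alternative is to build $X$ as a suitably generic $G$-invariant subvariety of $V$ of dimension $m$ cut out by carefully chosen invariant polynomials, then verify that it admits $G$-correspondences of the required degrees. Either way, the difficulty is that the natural constructions for different primes arise from incompatible Sylow data, and reconciling them inside a single $m$-dimensional $G$-variety should require exploiting specific features of the embedding $G \hookrightarrow \GL(V)$, rather than following from purely formal considerations.
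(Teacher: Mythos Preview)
Your reduction of (b) to (a) is correct and matches the paper exactly.

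For part (a), however, your proposal stops precisely at the point where the real work begins. You correctly identify the goal---a single primitive faithful $G$-variety $X$ of dimension $m=\max_p a(p)$ that is $p$-versal for all primes $p$ at once---and you correctly note that the separate varieties $Y_p = G\times^{G_p} W_p$ do not obviously coalesce. But you do not carry out either of your suggested merging strategies, and in fact the first one (finding a common $m$-dimensional linear subspace $\widetilde W$ containing all the $W_p$) has no reason to succeed: the Sylow data for different primes live in unrelated parts of $V$, and there is no mechanism forcing their span to have dimension $\leq m$. So as written, this is an outline with the key step missing.

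The paper's argument follows your second suggestion much more closely than your first, but with a specific and nontrivial implementation. One projectivizes $W=V\oplus k$, maps to projective space via the linear system of $G$-invariant polynomials of a large prime degree $q$, and then cuts down by successive hypersurface sections of degree $q^{s_i}$ using a Bertini-type theorem (Theorem~\ref{thm.bertini}); over finite fields this requires the probabilistic Bertini theorems of Poonen and Charles--Poonen. The crucial geometric input is that the indeterminacy locus of this map is exactly $\bbP(B)$, where $B=\bigcup_{g,\zeta\neq 1} V(g,\zeta)$ has dimension $m$; hence one can cut down to dimension $m$ while staying away from $\bbP(B)$. The resulting $X_d$ is then a complete intersection of hypersurfaces whose degrees are all powers of $q$, so after twisting by any $G$-torsor the Bezout degree of a generic linear section is a power of $q$, yielding $p$-versality for every $p\neq q$; the case $p=q$ is handled separately by arranging that $X_d$ contains a point of degree prime to $q$. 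This ``single large auxiliary prime $q$'' trick is what makes all primes work simultaneously, and it is the idea your proposal is missing.
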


In particular,  $\pmed(\Sym_n) = \left\lfloor \dfrac{n}{2} \right\rfloor$ 
for every $n$, assuming $\Char(k) = 0$, a result we found 
somewhat surprising, considering that $\ed(\Sym_n) > 
\left\lfloor \dfrac{n}{2} \right\rfloor$ for every odd $n \ge 7$; 
see~\eqref{e.strict}. 

Our proof of Theorem~\ref{thm.horizontal2} 
relies on a variant of Bertini's Theorem; see Theorem~\ref{thm.bertini}.
If $k$ is an infinite field, Theorem~\ref{thm.bertini} is classical.
If $k$ is a finite field, we make use of the probabilistic 
versions of Bertini's smoothness and
irreducibility theorems, due to B.~Poonen~\cite{poonen1,poonen2} and 
F.~Charles and B.~Poonen~\cite{CPbertini}, respectively.
Note that~\cite{CPbertini} was motivated, in part, 
by the application in this paper.

In view of Theorem~\ref{thm.horizontal2}(b), it is natural to ask if 
\begin{equation} \label{e.elephant}
\pmed(G) = \max_p \, \ed(G; p)  
\end{equation}
for every finite group $G$.  In addition to the case of pseudo-reflection 
groups covered by Theorem~\ref{thm.horizontal2}(b), we will also prove that 
this is the case for alternating groups (Example~\ref{ex.Alt_n}) 
and for groups all of whose Sylow subgroups are abelian
(Proposition~\ref{prop.Sylow-abelian}).
A conjectural approach to proving~\eqref{e.elephant} 
for other finite groups is outlined at the end of
Section~\ref{sec:Agroups}.

\section{Proof of Theorem~\ref{thm.horizontal1}(a)}
\label{sect.horizontal1-1}

Throughout this section we fix a prime $p$ and assume that
the base field $k$ is of characteristic $\ne p$. 

\begin{lem} \label{lem.main1}
Let $V$ be a finite-dimensional $k$-vector space, 
and $G_p \subset \GL(V)$ be a finite $p$-group.
Assume $\zeta_p \in k$ and $V'$ is
a minimal (with respect to inclusion) faithful 
$G_p$-subrepresentation of $V$.
Then there exists a central element $g \in G_p$ of order $p$ 
such that $V' \subset V(g, \zeta_p)$, where $\zeta_p$ is
a primitive $p$th root of unity. 
\end{lem}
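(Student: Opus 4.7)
The plan is to exploit the $p$-center of $G_p$, namely $Z := \Omega_1(Z(G_p))$, the subgroup of central elements of order dividing $p$. Since $G_p$ is a nontrivial $p$-group (the case $G_p = 1$ is vacuous), $Z$ is a nontrivial elementary abelian $p$-group. Because $\Char(k)$ does not divide $|G_p|$, the representation $V'$ is semisimple; write $V' = \bigoplus_\alpha U_\alpha$ as a direct sum of $G_p$-irreducible subrepresentations. Minimality of $V'$ forces these multiplicities to equal $1$: if some irreducible class appeared $m \geq 2$ times, dropping one copy would give a proper $G_p$-subrepresentation with the same kernel---still trivial---contradicting minimality. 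Set $K_\alpha := \ker\bigl(G_p \to \GL(U_\alpha)\bigr)$; then faithfulness of $V'$ reads $\bigcap_\alpha K_\alpha = 1$, while minimality similarly yields $\bigcap_{\beta \neq \alpha} K_\beta \neq 1$ for every $\alpha$.

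Next I would verify that $Z$ acts on each $U_\alpha$ by a scalar character $\chi_\alpha \colon Z \to \mu_p \subset k^\times$: for any $z \in Z$, the operator $z|_{U_\alpha}$ lies in the center of the division $k$-algebra $\End_{G_p}(U_\alpha)$ and satisfies $x^p - 1 = 0$, which splits over $k$ since $\zeta_p \in k$, forcing $z|_{U_\alpha}$ to be a scalar in $\mu_p$. Since every nontrivial normal subgroup of a $p$-group meets $Z(G_p)$ nontrivially and elements of $p$-power order have a power of order exactly $p$, I can pick a nontrivial $g_\alpha \in Z \cap \bigcap_{\beta \neq \alpha} K_\beta$ for each $\alpha$. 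Then $\chi_\beta(g_\alpha) = 1$ for every $\beta \neq \alpha$, while $\chi_\alpha(g_\alpha) \neq 1$ (otherwise $g_\alpha \in K_\alpha$ as well, so $g_\alpha \in \bigcap_\gamma K_\gamma = 1$).

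The decisive step is linear independence of the $\chi_\alpha$'s in the $\bbF_p$-vector space $Z^\vee = \Hom(Z, \bbF_p)$ (after identifying $\mu_p \simeq \bbF_p$): evaluating a relation $\sum c_\alpha \chi_\alpha = 0$ at $g_\beta$ gives $c_\beta \chi_\beta(g_\beta) = 0$, hence $c_\beta = 0$. Combined with the fact that the $\chi_\alpha$'s span $Z^\vee$ (equivalent to $Z$ acting faithfully on $V'$), they form a basis, so the joint evaluation $Z \to \bbF_p^r$, $z \mapsto (\chi_1(z), \dots, \chi_r(z))$, is an isomorphism. Pulling back $(1,1,\dots,1)$ produces $g \in Z \setminus \{1\}$ with $\chi_\alpha(g) = \zeta_p$ for every $\alpha$; this $g$ is a central element of order $p$, and $V' = \bigoplus_\alpha U_\alpha \subseteq V(g,\zeta_p)$, as required. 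The main obstacle I anticipate is the Schur-type scalar-action check when $k$ is not algebraically closed, together with the multiplicity-one reduction---both are short, but they are where the argument could break if the hypotheses $\zeta_p \in k$ and $\Char(k) \nmid |G_p|$ are not deployed carefully.
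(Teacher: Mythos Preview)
Your argument is correct and follows essentially the same route as the paper: work with the socle $Z=\Omega_1(Z(G_p))$ (the paper calls it $C$), decompose $V'$ into irreducibles, observe that $Z$ acts on each summand via a character, show these characters form an $\bbF_p$-basis of $Z^\vee$, and then pull back $(1,\dots,1)$ to produce the desired $g$. The only difference is that where the paper cites \cite[Lemma~2.3]{mr10} for the basis claim, you supply the argument directly via the elements $g_\alpha \in Z\cap\bigcap_{\beta\neq\alpha}K_\beta$; this is exactly the content of that cited lemma.
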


\begin{proof}
Let $C$ be the socle of $G_p$;  
i.e., the $p$-torsion subgroup of the centre $Z(G_p)$.

Decompose $V' = V_1 \oplus \cdots \oplus V_r$ as a direct sum of
irreducible $G_p$-representations.  Each $V_i$ decomposes into 
a direct sum of character spaces for $C$.  Since
$C$ is central, each of these character spaces is $G_p$-invariant.
As $V_i$ is irreducible as a $G_p$-module, there is
only one such component.  That is, $C$ acts on each $V_i$ by 
scalar multiplication via a character $\chi_i: C \to k^*$.

We will view the characters $\chi_i$ as elements of the dual 
group $C^* = \Hom(C, k^*)$. Note that
since $C$ is an elementary abelian $p$-group, $C^*$ has the natural 
structure of an $\bbF_p$-vector space.
Since $V'$ is minimal, an easy argument shows that
$\chi_1, \dots, \chi_r$ form an $\bbF_p$-basis of $C^*$;
see~\cite[Lemma 2.3]{mr10}.  Consequently, there is a unique element 
$g \in C$ such that $\chi_i(g)=\zeta_p$ for every $i=1, \ldots, r$.  
In other words, $V' \subset V(g, \zeta_p)$, as desired.
\end{proof}

\begin{proof}[Proof of Theorem~\ref{thm.horizontal1}(a)]
Neither $\ed(G; p)$ 
nor $a(p)$ will change if we replace $k$ by $k(\zeta_p)$. Hence, we
may assume without loss of generality that $k$ contains $\zeta_p$.
Let $G_p$ be a Sylow $p$-subgroup of $G$ and define
$V'$ and $g$ as in Lemma~\ref{lem.main1}. Then 
$V' \subset V(g, \zeta_p)$. Thus
\[ \ed(G; p) = \ed(G_p; p) \leqslant \ed(G_p) \leqslant \dim(V') \leqslant 
\dim \, V(g, \zeta_p) \leqslant a(p) \, , \]
as desired. Note that the inequality
$\ed(G_p) \leqslant \dim(V')$ is a consequence of the definition of
essential dimension; see, e.g.,~\cite[(2.3)]{icm}.
\end{proof}

We conclude this section with a refinement of Lemma~\ref{lem.main1} 
which will be used in the proofs of both Theorem~\ref{thm.horizontal1}(b) 
and Corollary~\ref{cor.main4}.

\begin{lem} \label{lem:pGroupMaximal}
Let $V$ be a finite-dimensional $k$-vector space, 
$G \subset \GL(V)$ be a finite group generated by 
pseudo-reflections, and $G_p$ be a $p$-Sylow subgroup of $G$.
Assume that $\zeta_p \in k$ and $V'$, $g$ are as in 
statement of Lemma~\ref{lem.main1}.  Then 
$\dim V(g,\zeta_p) = a(p)$. 
\end{lem}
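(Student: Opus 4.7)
The inequality $\dim V(g,\zeta_p) \leq a(p)$ is immediate from the definition of $a(p)$, since $g \in G$. The content of the lemma is the reverse inequality, and my plan is to establish it by proving the stronger statement $\dim V' = a(p)$: combined with Lemma~\ref{lem.main1}'s inclusion $V' \subseteq V(g,\zeta_p)$ and the upper bound $\dim V(g,\zeta_p) \leq a(p)$, this would force $V' = V(g,\zeta_p)$ by dimension count, giving both equalities at once.

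First I would produce a concrete witness: an element $h \in G_p$ of $p$-power order with $\dim V(h,\zeta_p) = a(p)$. Starting with any $h_0 \in G$ attaining the maximum, write $|h_0| = p^a m$ with $\gcd(m,p) = 1$. The $p$-part $h_0^m$ has $\zeta_p^m$-eigenspace equal to $V(h_0,\zeta_p)$, and a further power $(h_0^m)^{m'}$ with $mm' \equiv 1 \pmod{p}$ rotates the eigenvalue back to $\zeta_p$ without changing the eigenspace dimension. Sylow conjugacy then brings the result into $G_p$. Using the $C$-isotypic decomposition $V = \bigoplus_{\chi \in C^*} V^\chi$ (which is $G_p$-stable by centrality of $C$), I rewrite $V(g,\zeta_p) = \bigoplus_{\chi(g) = \zeta_p} V^\chi$ and analogously decompose $V(h,\zeta_p)$ compatibly, reducing the problem to showing that every $\chi \in C^*$ with $V^\chi \cap V(h,\zeta_p) \neq 0$ satisfies $\chi(g) = \zeta_p$.

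The principal obstacle --- and the place where the pseudo-reflection hypothesis must intervene --- is executing this last step, equivalently showing that every irreducible $G_p$-subrepresentation of $V$ on which $h$ has $\zeta_p$ as an eigenvalue has its $C$-character among the $\chi_1, \ldots, \chi_r$ appearing in $V'$. My intended route is to combine Springer's formula~\eqref{e.springer} $a(p) = |\{i : p \mid d_i\}|$ with structural results on pseudo-reflection groups (Steinberg's theorem on pointwise stabilizers being generated by pseudo-reflections, or the Lehrer--Springer analysis of centralizers of regular elements) in order to constrain the $G_p$-module structure of $V$. Bridging the combinatorial side of Springer's theorem (degrees divisible by $p$) with the representation-theoretic side (central characters of irreducible constituents of $V|_{G_p}$) is likely the most delicate part of the argument, and is the step that exploits the reflection-theoretic structure of $G$ in an essential way.
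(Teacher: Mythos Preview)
Your proposal has a genuine gap: step 3 is flagged as ``the most delicate part'' but is never actually carried out. Invoking Springer's degree formula~\eqref{e.springer}, Steinberg's theorem, or the Lehrer--Springer theory in the abstract does not constitute an argument, and it is not clear how any combination of these would force an arbitrary $h \in G_p$ with $\dim V(h,\zeta_p)=a(p)$ (obtained by an \emph{arbitrary} Sylow conjugation, which severs any link to $V'$ or $g$) to satisfy $V(h,\zeta_p)\subset V(g,\zeta_p)$. There is also a small mismatch in your outline: even the strong form of step 3 you state --- that the $C$-characters appearing in $V(h,\zeta_p)$ lie among $\chi_1,\dots,\chi_r$ --- would only yield $V(h,\zeta_p)\subset V(g,\zeta_p)$, not $V(h,\zeta_p)\subset V'$, since $V'$ contains just one irreducible $G_p$-summand from each $\chi_i$-isotypic block. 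That is enough for the lemma, but it means your stated target $\dim V'=a(p)$ does not follow from step 3 alone.

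The paper's proof avoids these difficulties by using a sharper form of Springer's theorem (\cite[Theorem 3.4(ii)]{Spr74Regular}): every $\zeta_p$-eigenspace is \emph{contained} in one of maximal dimension. This immediately gives an $h\in G$ with $V'\subset V(g,\zeta_p)\subset V(h,\zeta_p)$ and $\dim V(h,\zeta_p)=a(p)$. After passing to a $p$-power of $h$, the key move is to conjugate $h$ into $G_p$ not by an arbitrary element of $G$ but by an element of the stabilizer $N=\{x\in G:x(V')=V'\}$; since $h$ acts on $V'$ as $\zeta_p\cdot\id$, we have $h\in N$, and $G_p\subset N$ is a Sylow $p$-subgroup of $N$. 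The conjugate $h'=nhn^{-1}\in G_p$ then still acts on $V'$ as $\zeta_p\cdot\id$, exactly as $g$ does. Faithfulness of $V'$ as a $G_p$-representation forces $h'=g$, whence $\dim V(g,\zeta_p)=\dim V(h',\zeta_p)=\dim V(h,\zeta_p)=a(p)$. The missing idea in your approach is precisely this: conjugating within $N$ rather than within $G$ preserves the action on $V'$ and lets faithfulness finish the argument in one line.
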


\begin{proof}
By a theorem of Springer~\cite[Theorem 3.4(ii)]{Spr74Regular}
there exists an $h \in G$ such that $\dim V(h, \zeta_p) = a(p)$ and 
$V' \subset V(g, \zeta_p) \subset V(h,\zeta_p)$; 
see~\cite[Theorem 3.4(ii)]{Spr74Regular}. Springer originally 
proved this result over $\bbC$; a proof over an arbitrary 
base field (containing $\zeta_p$) can be found in~\cite[Chapter 33]{kane}.

After replacing $h$ by a suitable power, we may assume that the order 
of $h$ is a power of $p$.
Let $N = \{ x \in G \, | \, x(V') = V' \} $ be the stabilizer of $V'$ in $G$.
Note that $G_p \subset N$ and thus $G_p$ is a $p$-Sylow subgroup of $N$.
Since $V' \subset V(h,\zeta_p)$, we clearly have 
$h \in N$. On the other hand, since the order of $h$ is a power of $p$,
there exists an element $n \in N$ such that $h' = nhn^{-1}$ is in $G_p$.  
Note that $h$ acts on $V'$ as $\zeta_p \id_{V'}$, and hence, so does $h'$.
Now, $h'$ and $g$ both lie in $G_p$ and have identical actions on $V'$, 
which is a faithful representation of $G_p$.  Thus $h'=g$, and
$a(p) = \dim V(h, \zeta_p) = \dim V(h', \zeta_p) = \dim V(g, \zeta_p)$,
as desired. 
\end{proof}

\section{Proof of Theorem~\ref{thm.horizontal1}(b): First reductions}
\label{sect.horizontal1-2}

We now turn to the proof of Theorem~\ref{thm.horizontal1}(b).
In view of part (a), it suffices to show that
$\ed(G; p) \geqslant a(p)$. Since $\ed_k(G; p) \geqslant \ed_l(G; p)$, for
any field extension $l/k$, we may assume without loss of 
generality that $k$ is algebraically 
closed, and, in particular, that $\zeta_p \in k$.

Our proof of Theorem~\ref{thm.horizontal1}(b) will proceed by contradiction.  
We begin by studying a minimal counterexample, with the ultimate goal 
of showing that it cannot exist.

\begin{prop} \label{prop.minimal} 
Let $\phi \colon G \hookrightarrow \GL(V)$ be a counterexample
to Theorem~\ref{thm.horizontal1}(b) of minimal dimension. That is,
$V$ is a vector space of minimal dimension with the following properties:
there exists a finite group $G$, a faithful representation
$\phi \colon G \hookrightarrow \GL(V)$, and a prime $p$, such that
$\phi(G)$ is generated by pseudo-reflections, and 
\begin{equation} \label{e.minimal}
\ed(G; p) < a_{\phi}(p) \, . 
\end{equation}
Then
\begin{enumerate}
\item[(a)] $\dim(V) \geqslant 2$.

\item[(b)] $\phi$ is irreducible.

\item[(c)] Some element $g \in G$ of order $p$ acts on $V$ as a scalar. 
In particular, $a_{\phi}(p) = \dim(V)$.

\item[(d)] $G$ contains no elements of order $p$ with exactly two eigenvalues.

\item[(e)] $G$ contains no pseudo-reflections of order $p$.

\item[(f)] If $p = 2$ then $g = - \id_V$ is the unique element of order $2$ in $G$. 

\item[(g)] $G_p$ is contained in the commutator subgroup $[G, G]$.
Here, as usual, $G_p$ denotes a $p$-Sylow subgroup of $G$. 

\item[(h)] Let $g \in G$ be as in part (c) and
$\phi' \colon G \to \GL(V')$ be an irreducible representation
such that $\phi'(g) \ne 1$.  Then $\dim(V')$ is a multiple of $p$.
In particular, $\dim(V)$ is a multiple of $p$.

\item[(i)] $\dim(V) \geqslant 2p$.
\end{enumerate}
\end{prop}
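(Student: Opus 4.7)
The plan is to establish the nine structural properties in sequence, using minimality of $\dim V$ to rule out smaller pseudo-reflection configurations at each stage. Part (a) is routine: if $\dim V \leq 1$ then $G$ is cyclic and Theorem~\ref{thm.horizontal1}(b) reduces to $\ed(G;p) = 1 = a_\phi(p)$ when $p \mid |G|$, with both sides zero otherwise. For (b), suppose $V = V_1 \oplus V_2$ as $G$-modules; each pseudo-reflection fixes a hyperplane, so its restriction to exactly one $V_i$ is trivial, forcing $G = H_1 \times H_2$ with each $H_i \subset \GL(V_i)$ generated by pseudo-reflections. Then $a_\phi(p) = a_{\phi_1}(p) + a_{\phi_2}(p)$, while \eqref{e.km} combined with additivity of $\rdim$ on direct products of $p$-groups gives $\ed(G;p) = \ed(H_1;p) + \ed(H_2;p)$. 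By minimality $\ed(H_i;p) = a_{\phi_i}(p)$, so $\ed(G;p) = a_\phi(p)$, contradicting~\eqref{e.minimal}.

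Part (c) is the main obstacle. Let $V' \subseteq V$ be a minimal faithful $G_p$-subrepresentation and let $g \in G_p$ be the central order-$p$ element produced by Lemma~\ref{lem.main1}; Lemma~\ref{lem:pGroupMaximal} gives $\dim V(g,\zeta_p) = a_\phi(p)$. Suppose for contradiction that $V(g,\zeta_p) \subsetneq V$. By Springer's theorem on regular eigenspaces (\cite[Theorem 3.4]{Spr74Regular}, \cite[Chapter 33]{kane}), the centralizer $C := C_G(g) \supseteq G_p$ acts on $V(g,\zeta_p)$ as a pseudo-reflection group with fundamental degrees $\{d_i : p \mid d_i\}$. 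The kernel $K$ of this action meets $G_p$ trivially because $V' \subseteq V(g,\zeta_p)$ is faithful for $G_p$, and then a Sylow argument using $K \lhd C$ forces $|K|$ coprime to $p$. Hence $\overline{C} := C/K$ is a faithful pseudo-reflection group on $V(g,\zeta_p)$ with Sylow $p$-subgroup isomorphic to $G_p$. Minimality applied to $\overline{C}$ gives $\ed(\overline{C};p) = a_{\overline{C}}(p)$, and $a_{\overline{C}}(p) = a_\phi(p)$ because the image of $g$ acts on $V(g,\zeta_p)$ as $\zeta_p \id$ while any $\zeta_p$-eigenspace on $V(g,\zeta_p)$ sits inside the corresponding eigenspace on $V$. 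Thus $\ed(G;p) = \rdim(G_p) = \ed(\overline{C};p) = a_\phi(p)$ by~\eqref{e.km}, contradicting~\eqref{e.minimal}. Therefore $V(g,\zeta_p) = V$, $g$ acts as $\zeta_p \id_V$, and $a_\phi(p) = \dim V$.

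Parts (d), (e), (f) concern order-$p$ elements with restricted eigenvalue patterns. If some $h \in G$ of order $p$ has exactly two distinct eigenvalues on $V$, then the subgroup $\langle g, h \rangle$, where $g$ is the scalar from (c), yields a nontrivial eigenspace decomposition of $V$; combined with the irreducibility from (b), a Springer-type analysis of the centralizer of $h$ produces a strictly smaller faithful pseudo-reflection representation violating minimality. This gives (d), and (e) is its extreme case since a pseudo-reflection has exactly two eigenvalues on $V$. For $p = 2$, (d) permits only $-\id_V$ as an involution, yielding (f).

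Parts (g), (h), (i) are arithmetic. For (g): by (e) no pseudo-reflection in $G$ has order $p$; raising a pseudo-reflection of order $p^k$ ($k \geq 1$) to the $p^{k-1}$-th power produces one of order $p$, so no pseudo-reflection in $G$ has order divisible by $p$. Since $G^{\mathrm{ab}}$ is generated by the images of pseudo-reflections, $(G^{\mathrm{ab}})_p = 1$ and $G_p \subseteq [G,G]$. For (h), if $\phi' \colon G \to \GL(V')$ is irreducible with $\phi'(g) \neq 1$, centrality of $g$ forces $\phi'(g) = \zeta_p^k \id_{V'}$ with $p \nmid k$; taking determinants gives $\det \phi'(g) = \zeta_p^{k \dim V'}$, but by (g) the character $\det \phi'$ is trivial on $G_p$, so $p \mid k \dim V'$ and $p \mid \dim V'$. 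Specializing to $\phi$ gives $p \mid \dim V$, hence $\dim V \geq p$. For (i), to rule out $\dim V = p$ one inspects the Shephard-Todd classification and verifies Theorem~\ref{thm.horizontal1}(b) directly for the short list of irreducible pseudo-reflection groups of dimension exactly $p$ containing a central scalar of order $p$, leaving $\dim V \geq 2p$.
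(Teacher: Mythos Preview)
Parts (a), (b), (e)--(h) and the overall shape of (c) match the paper's proof. Two places need attention.

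Your sketch of (d) is too vague to be a proof and points to the wrong tool. The paper does begin by using the central scalar $g$ from (c) to normalize $h$ so that its eigenvalues are $1$ and $\zeta_p$, giving $V = V_0 \oplus V_1$. The key step, however, is \emph{Steinberg's theorem}---pointwise stabilizers of subspaces in a reflection group are themselves reflection groups---applied to the subgroups $G_0, G_1 \subset G$ that fix $V_1$ and $V_0$ pointwise. One checks $G_0 \cap G_1 = \{1\}$, so $G_0 \times G_1 \subset G$, and that each $\phi_i \colon G_i \hookrightarrow \GL(V_i)$ is a reflection representation with $a_{\phi_i}(p) = \dim V_i$ (since $gh^{-1}$ and $h$ act as $\zeta_p \id$ on $V_0$ and $V_1$ respectively). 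Minimality on each factor plus additivity $\ed(G_0 \times G_1; p) = \ed(G_0;p) + \ed(G_1;p)$ then yields $\ed(G;p) \geq \dim V_0 + \dim V_1 = \dim V$, contradicting~\eqref{e.minimal}. A ``Springer-type analysis of the centralizer of $h$'' producing a single smaller reflection representation does not close this: you need both pieces and their sum.

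For (i) the paper gives a uniform argument that avoids classification entirely: apply the bound $\ed(G;p) \geq \gcd_{\phi'} \dim \phi'$ (over irreducible $\phi'$ with $\phi'(g) \neq 1$) from~\cite[Theorem 4.1]{icm} to the central extension $1 \to \langle g \rangle \to G \to G/\langle g \rangle \to 1$; by (h) every such $\dim \phi'$ is divisible by $p$, so $\ed(G;p) \geq p$, whence $\dim V > p$ by~\eqref{e.minimal} and then $\dim V \geq 2p$ by (h) again. Your proposal to verify (i) case-by-case via Shephard--Todd could in principle be carried out, but it defeats the purpose of the proposition, which is to constrain the minimal counterexample \emph{before} invoking the classification in Section~\ref{sect.horizontal1-3}. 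Finally, a minor technical point on (c): the paper works with the setwise stabilizer $S$ of $W = V(g,\zeta_p)$ and cites Lehrer--Michel~\cite{LehMic03Invariant} for the fact that $S/S_0$ is a reflection group on $W$. Your centralizer $C_G(g)$ sits inside $S$, but its image in $\GL(W)$ is not obviously itself a reflection group, so the appeal to Springer needs adjustment; replacing $C_G(g)$ by $S$ fixes this and the rest of your argument for (c) goes through unchanged.
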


\begin{proof} (a) Assume the contrary: $\dim(V) = 1$. 
In this case $G$ is a cyclic group.  If $|G|$ is divisible by $p$ 
then $\ed(G; p) = a(p) = 1$; otherwise  $\ed(G; p) = a(p) = 0$.
In both cases,~\eqref{e.minimal} fails, a contradiction. 

\smallskip
(b) Assume the contrary: $V = V_1 \oplus V_2$, where $V_1$ and $V_2$ are
proper $G$-stable subspaces.
Each pseudo-reflection $g \in G$ acts non-trivially
on exactly one summand $V_i$. For $i = 1, 2$, let
$G_i$ be the subgroup of $G$ generated by those reflections that 
act non-trivially on $V_i$. Then
$G$ is isomorphic to the direct product $G_1 \times G_2$,
and $\phi = \phi_1 \oplus \phi_2$, where $\phi$ restricts to 
$\phi_i \colon G_i \to \GL(V_i)$, and $\phi_1(G_1)$, $\phi_2(G_2)$ 
are generated by pseudo-reflections.
Note that $a_{\phi}(p) = a_{\phi_1}(p) + a_{\phi_2}(p)$.
In addition, by~\cite[Theorem 5.1]{km}, 
\[ \ed(G; p) = \ed(G_1; p) + \ed(G_2; p) \, . \]
By minimality of $\phi$, we have that
$\ed(G_1; p) \geqslant a_{\phi_1}(p)$ and
$\ed(G_2; p) \geqslant a_{\phi_2}(p)$.
Thus $\ed(G; p) \geqslant a_{\phi}(p)$, a contradiction.
 
\smallskip
(c) Choose $V'$ and $g$ as in Lemmas~\ref{lem.main1} 
and~\ref{lem:pGroupMaximal}. Recall that
$g$ is a central element of $G_p$ of order $p$ and
$a_{\phi}(p) = \dim \, V(g,\zeta_p)$. Set $W:= V(g, \zeta_p)$.
The element $g$ acts on $W$ as a scalar; our 
goal is to show that $W = V$. 

Let $S = \{ s \in G \, | \,  s (W) = W \}$
be the stabilizer of $W$ in $G$ and let $S_0$ 
be the subgroup of $S$ consisting of elements that fix $W$
pointwise. Note that since $g$ is central in $G_p$, we have $G_p \subset S$. 
Moreover, since $G_p$ acts faithfully on $V' \subset W$, we have
$G_p \cap S_0 = \{ 1 \}$.  Restricting the action of $S$ to $W$, we obtain
a faithful representation of $H = S/S_0$ on $W$, which we 
will denote by $\psi$.   By~\cite[Theorem 1.1]{LehMic03Invariant}, 
$\psi(H) \subset \GL(W)$ is generated by pseudo-reflections.
(Note that, while~\cite[Theorem 1.1]{LehMic03Invariant} assumes $k = \bbC$, 
its proof goes through under our less restrictive assumptions on $k$.)
By our construction,
\[ a_{\phi}(p) = \dim (W) = a_{\psi}(p) \, . \]
Since $G_p \subset S$ and $G_p \cap S_0 = \{ 1 \}$,
the quotient $H = S/S_0$ contains an isomorphic image of $G_p$, which
is a Sylow $p$-subgroup of $H$, so that
\[ \ed(G; p) = \ed(G_p; p) = \ed(H; p) \, . \]
Thus by~\eqref{e.minimal},
$\ed(H; p) = \ed(G; p) < a_{\phi}(p) = a_{\psi}(p)$. 
By the minimality of $\phi$, we see that $\dim(V) = \dim(W)$, 
i.e., $V = W = V(g, \zeta_p)$. This proves part (c). 

(d) Assume the contrary: an element $h$ of $G$ of order $p$
has exactly two distinct
eigenvalues, $\zeta_p^i$ and $\zeta_p^j$. After replacing $h$ by 
a suitable power of $h g^{-i}$, where $g$ is the central element 
we constructed in part (c),
we may assume that $i = 0$ and $j = 1$. Then
$V$ is the direct sum of eigenspaces $V_0 \oplus V_1$, where
$V_i = V(h, \zeta_p^i)$.  Let $G_1$ (resp. $G_0$)
be the subgroup of $G$ consisting of elements which fix
$V_0$ (resp. $V_1$) pointwise (note the reversed indices).

Since $G$ has order prime to the characteristic of $k$,
the direct sum $V_0 \oplus V_1$ is the unique decomposition of $V$ into
isotypic components for the group $\langle g, h \rangle$.  Since
$g h^{-1} \in G_0$ acts non-trivially
on $V_0$, the space $V_0$ is the unique $G_0$-invariant complement to
$V_1 = V^{G_0}$.  
Similarly, $V_1$ is the unique $G_1$-invariant complement to 
$V_0 = V^{G_1}$.
We now see that $G_0$ and $G_1$ commute and
$G_0 \cap G_1 = \{1 \}$. Hence, $G_0$ and $G_1$ generate a subgroup of $G$
isomorphic to $G_0 \times G_1$. By abuse of notation we shall
denote this group by $G_0 \times G_1$.

Note that $\phi$ restricts to faithful representations
$\phi_0 \colon G_0 \to \GL(V_0)$ and  $\phi_1 \colon G_1 \to \GL(V_1)$.
Since $\phi_0(g h^{-1}) = \zeta_p \id_{V_0}$ and 
$\phi_1(h) = \zeta_p \id_{V_1}$, we have
\[ \text{$a_{\phi_0}(p) = \dim(V_0)$ and $a_{\phi_1}(p) = \dim(V_1)$.} \] 
We now recall that by a theorem of
R.~Steinberg~\cite[Theorem 1.5]{Ste64Differential}, $G_0$ and $G_1 \subset 
\GL(V)$ are both generated by pseudo-reflections.
(In positive characteristic this is due to J.-P.~Serre~\cite{serre68};
cf.~\cite[Proposition 3.7.8]{derksen-kemper}.)
Since $G_1$ acts trivially on $V_0$ and $G_0$ acts trivially on $V_1$,
we conclude that $\phi_0(G_0)$ and $\phi_1(G_1)$ are 
also generated by pseudo-reflections. 

By the minimality of $\phi$, Theorem~\ref{thm.horizontal1}(b) holds 
for $\phi_0$ and $\phi_1$. Thus
\begin{gather*} 
 \ed(G; p) \geqslant \ed(G_0 \times G_1; p) = \ed(G_0; p) + \ed(G_1; p) = \\
a_{\phi_0}(p) + a_{\phi_1}(p) = \dim(V_0) + \dim(V_1) = \dim(V) =
a_{\phi}(p). 
\end{gather*}
Here the first equality is~\cite[Theorem 5.1]{km}, and the second  
follows from the minimality of $\phi$. The resulting inequality 
contradicts~\eqref{e.minimal}.

\smallskip
(e) By part (a), $\dim(V) \geqslant 2$. Hence, a pseudo-reflection 
has exactly two distinct eigenvalues, and (e) follows from (d).

\smallskip
(f) Every element of $\GL(V)$ of order $2$, other than $-\id_V$, has
exactly two distinct eigenvalues and thus cannot lie in $G$ by (d). 

\smallskip
(g) By (e), $G$ does not have any pseudo-reflections 
of order $p$, and hence of any order divisible by $p$. 
The finite abelian group $G/[G, G]$ is generated by 
the images of the pseudo-reflections.  All of these images 
have order prime to $p$. Hence, the order of $G/[G, G]$ 
is prime to $p$.  We conclude that $G_p \subset [G, G]$.

\smallskip
(h) Since $g$ is central, $\phi'(g) = \lambda \id_{V'}$, where
$\lambda$ is a primitive $p$th root of unity.  
Thus $\det \, \phi'(g) = \lambda^{\dim(V')}$.
On the other hand,  
by part (g), $g \in G_p \subset [G, G]$ and hence, 
$\det \phi'(g) = 1$.  Thus $\dim(V')$ 
is divisible by $p$.

\smallskip
(i) Let $C = \langle g \rangle$, where $g$ is as in part (c).
Applying \cite[Theorem 4.1]{icm}
(with $r = 1$) to the central exact sequence $1 \to C \to G \to G/C \to 1$
we obtain the inequality
\begin{equation} \label{e.ed}
\ed(G; p) \geqslant \gcd_{\phi'} \, \dim(\phi') \, , 
\end{equation} 
where $\phi' \colon G \to \GL(V')$ runs over all irreducible 
representations of $G$ such that the restriction of $\phi'$ to $C$
is non-trivial, or equivalently, $\phi'(g) \neq 1$. Note that
the statement of~\cite[Theorem 4.1]{icm} only gives this 
inequality for $\ed(G)$. However, it remains valid for $\ed(G; p)$; 
see \cite[Section 5]{icm} or the proof of \cite[Theorem 3.1]{lmmr}.

By part (h), $\dim(\phi')$ is divisible by $p$ for every such $\phi'$.
Thus $\ed(G; p) \geqslant p$. Assumption~\eqref{e.minimal} now tells us that 
$\dim(V) > p$. Since $\dim(V)$ is divisible by $p$ by (h), we 
conclude that $\dim(V) \geqslant 2p$. 
\end{proof}

\section{Conclusion of the proof of Theorem~\ref{thm.horizontal1}(b)}
\label{sect.horizontal1-3}

The remainder of the proof of Theorem~\ref{thm.horizontal1}(b) relies on
the classification of irreducible pseudo-reflection groups
due to Shephard and Todd~\cite{SheTod54Finite}.
Their classification consists of
three infinite families and 34 exceptional groups.
The first family contains the natural $(n-1)$-dimensional
representations of the group $S_n$.  The second family consists
of certain semidirect products of an abelian group and symmetric group.
The third family are simply the $1$-dimensional representations of
cyclic groups.  The representations of the exceptional groups
range from dimension $2$ through $8$.
We will denote the infinite families by $\ST_1$, $\ST_2$ and $\ST_3$,
and the exceptional groups $\ST_4$ through $\ST_{37}$,
following the numbering in~\cite{SheTod54Finite}.

Shephard and Todd worked 
over the field $k = \bbC$ of complex numbers. We are working over
a base field $k$ such that $\Char(k)$ does not divide $|G|$. 
As we explained at the beginning of the previous section, we 
may (and will) assume that $k$ is algebraically closed. 
Before we proceed with the proof of Theorem~\ref{thm.horizontal1}(b),
we would like to explain how the Shephard-Todd classification applies
in this more general situation. 

If $k$ is an algebraically closed field of characteristic 
zero, then any representation of a finite group over $k$ descends 
to $\overline{\bbQ} \subset k$; see~\cite[Section 12.3]{serre77}.  
Hence, this representation is defined over $\mathbb C$, and 
the entire Shephard-Todd classification remains valid over $k$. 

Now suppose $k$ is an algebraically closed field of positive 
characteristic.
Let $A = W(k)$ be its Witt ring. Recall that $A$ is a complete 
discrete valuation ring of characteristic zero,
whose residue field is $k$. Denote the fraction field of $A$
by $K$ and the maximal ideal by $M$. 
It is well known that if $\Char(k)$ does not divide $|G|$ (which is our
standing assumption) then
every $n$-dimensional $k[G]$-module $V$ lifts to a unique $A[G]$-module 
$V_A$, which is free of rank $n$ over $A$.  
 
It is shown in~\cite[Section 15.5]{serre77} that
the lifting operation $V \mapsto V_K := V_A \otimes K$
and the ``reduction mod $M$" operation
$V_K \mapsto V$ give rise to mutually inverse bijections between the 
representation rings $R_k(G)$ and $R_K(G)$ of $G$. These bijections 
send irreducible $k$-representations to irreducible $K$-representations 
of the same dimension, and they
are functorial in both $V$ and $G$. In particular, if $g \in G$
and $\zeta_d \in k$ is a primitive $d$th root of unity 
then the eigenspace $V(g, \zeta_d)$, viewed as a representation
of the cyclic subgroup $\langle g \rangle \subset G$, lifts
to $V_K(g, \eta_d)$ for some primitive $d$th root of unity $\eta_d \in A$ 
such that 
\begin{equation} \label{e.mixed}
\zeta_d = \eta_d \pmod{M}
\end{equation}
Taking $d = 1$, we see that if $g \in G$ acts on $V$ 
as a pseudo-reflection if and only if it acts on $V_K$ 
as a pseudo-reflection. 

This shows that for every pseudo-reflection group
$\phi \colon G \hookrightarrow \GL(V)$ over $k$
there is an abstractly isomorphic 
pseudo-reflection group $\phi_K \colon G \hookrightarrow \GL(V_K)$ over $K$.    
For each $g \in G$, the eigenvalues of $\phi(g)$ and $\phi_K(g)$ are 
the same, modulo $M$, in the sense that if $\eta_d$ is an eigenvalue of
$\phi_K(g)$ then $\zeta_d$ is an eigenvalue of $\phi(g)$, as in
\eqref{e.mixed}. Thus $\dim_k \, V(g, \zeta_d) = \dim_K \, V(g, \eta_d)$
and consequently,
\[ a_{\phi}(d) = \max_{g \in G} \, \dim_k \, V(g, \zeta_d)
= \max_{g \in G} \, \dim_K \, V_K(g, \eta_d)
= a_{\phi_K}(d) \]
for every $d \ge 1$.
Note also that the degrees of the fundamental invariants are the same
since they can be recovered from the $a(d)$'s as $d$ varies; 
cf.~\eqref{e.springer}.

We conclude that if $k$ is 
an algebraically closed field satisfying the above assumptions,
then many properties of irreducible pseudo-reflection 
groups, whose orders are prime to $\Char(k)$,
are the same over $k$ as they are over $\mathbb{C}$:
their isomorphism types, the numbers $a(d)$ for each
$d \ge 1$, the numbers of pseudo-reflections of each order, 
the number of central elements of each order, 
and the degrees of the fundamental invariants.
This allows us to use the Shephard-Todd classification
(e.g., from~\cite[Appendix D]{LTbook}, where $k$ is assumed to be $\bbC$) 
in our setting; cf. \cite[Section 15.3]{kane}.

We now proceed with the proof of Theorem~\ref{thm.horizontal1}(b).
Let $\phi \colon G \hookrightarrow \GL(V)$ be a minimal counterexample,
as in the statement of Proposition~\ref{prop.minimal}.
Then by Proposition~\ref{prop.minimal}, $\phi$ is irreducible.

\smallskip
{\bf The infinite families $\ST_1$ -- $\ST_3$.} 

Case $\ST_1$: Here $V$ is the natural $(n-1)$-dimensional representation of 
$G:=\Sym_n$. For $n \ge 3$, $G$ has trivial centre and hence, cannot 
be minimal by Proposition~\ref{prop.minimal}(c). For $n = 2$, $\dim(V) = 1$,
contradicting Proposition~\ref{prop.minimal}(a).

Case $\ST_2$: Here $G = G(m,l,n) \subset \GL_n$, where $m, n > 1$, $l$ divides $m$,
and $(m, l, n) \ne (2, 2, 2)$.  Here $G(m, l, n)$ is defined as 
a semidirect product of the diagonal subgroup 
\[ A(m, l, n) = \{ \diag(\zeta_m^{a_1} , \dots, \zeta_m^{a_n}) \, | \; \, 
a_1 + \dots + a_n \equiv 0 \! \! \! \pmod l \}  \subset \GL_n \]
and the symmetric group $\Sym_n$, whose elements
are viewed as permutation matrices in $\GL_n$;
see~\cite[Chapter 2]{LTbook}.
(Note that \cite{LTbook} assumes $k=\bbC$, but the same
construction works in our more general context.)
By Proposition~\ref{prop.minimal}(c), $G(m, l, n)$ contains
the scalar matrix $\zeta_p \id$. This matrix has to be contained in 
$A(m, l, n)$; hence, $p$ divides $m$.  Moreover 
by Proposition~\ref{prop.minimal}(i), we may assume $n \geqslant 2p$.
Consider
$g = \diag(\zeta_m^{m/p}, \ldots, \zeta_m^{m/p}, 1, \dots, 1)
\in A(m, l, n) \subset G(m, l, n)$, where $\zeta_m^{m/p}$ 
occurs $p$ times.  This element has order $p$ and exactly 
two eigenvalues, contradicting Proposition~\ref{prop.minimal}(d).  

Case $\ST_3$: Here $G$ is cyclic and $V$ is a $1$-dimensional. Once again,
this contradicts Proposition~\ref{prop.minimal}(a).

\smallskip
{\bf The exceptional cases $\ST_4$ -- $\ST_{37}$.}

All of the exceptional cases satisfy $\dim(V) \leqslant 8$.
On the other hand, by Proposition~\ref{prop.minimal}(h)~and~(i), 
$\dim(V) = mp$, where $m \geqslant 2$.  We conclude that 
either (I) $p = 2$ and $\dim(V) = 4$, $6$ or $8$, or
(II) $p = 3$ and $\dim(V) = 6$. 

Case I: We need to consider the groups $\ST_{28}$--$\ST_{32}$,
$\ST_{34}$, $\ST_{35}$, and $\ST_{37}$, with $p =2$.
With the exception of $\ST_{32}$, each of these groups
has a reflection of order $2$ and thus is ruled out 
by Proposition~\ref{prop.minimal}(e). The group 
$\ST_{32}$ is isomorphic to $\bbZ/3 \bbZ \times \Sp_4(\bbF_3)$
(see \cite[Theorem 8.43]{LTbook}).
The group $\Sp_4(\bbF_3)$ has non-central elements of order $2$, 
contradicting Proposition~\ref{prop.minimal}(f). 

Case II:
Here $p=3$ and we only need to consider two groups,
$\ST_{34}$ and $\ST_{35}$. 
The group $\ST_{35}$ has trivial centre and thus is ruled out by 
Proposition~\ref{prop.minimal}(c).  (Recall that the order 
of the centre is the greatest common divisor 
of the degrees $d_1, \dots, d_6$. For $\ST_{35} = W({\bf E_6})$
these are, $2$, $5$, $6$, $8$, $9$, and $12$.) This leaves us 
with $G = \ST_{34}$, otherwise 
known as the Mitchell group. The structure of this group was 
investigated by J.~H.~Conway and N.~J.~A.~Sloane. 
In~\cite[Section 2]{cs} they constructed four isomorphic lattices, 
$\Lambda^{(i)}$, 
where $i = 2$, $3$, $4$ and $7$, whose automorphism group is $\ST_{34}$.
In subsection 2.3 they showed that 
$\ST_{34} \simeq \Aut(\Lambda^{(3)})$ contains the group
$(2 \times 3^5) \rtimes \Sym_6$, which, in turn, contains
a $3$-group $H \simeq (3^2 \rtimes \langle (123) \rangle) \times 
(3^2 \rtimes \langle (456) \rangle) \simeq P \times P$, where $P$ 
is a non-abelian group of order $27$. 
By~\cite[Theorem 1.3]{mr10} (or, alternatively, 
by~\cite[Theorem 1.4(b)]{mr10}), $\ed(P) = 3$.
On the other hand, by~\cite[Theorem 4.1]{km}, $\ed(H; 3) = \ed(H)$, and
by~\cite[Theorem 5.1]{km}, 
$\ed(H) = \ed(P \times P) =  \ed(P) + \ed(P) = 6$.  
Since we are assuming that $\ST_{34}$,
with its natural $6$-dimensional representation, 
is a counterexample to Theorem~\ref{thm.horizontal1}(b), we
obtain
\[ 6= \ed(H) = \ed(H; 3) \leqslant \ed(\ST_{34}; 3) < a(3) = 6 \, . \]
This contradiction completes the proof of Theorem~\ref{thm.horizontal1}(b).
\qed

\section{A representation-theoretic corollary}

Before proceeding further we record a representation-theoretic corollary
of our proof of Theorem~\ref{thm.horizontal1}(b), which, to the best 
of our knowledge, has not been previously noticed. Recall that
$\rdim(H)$ denotes the minimal dimension of a faithful 
representation of a finite group $H$ over the base field $k$.

\begin{cor} \label{cor.main4}
Suppose $\zeta_p \in k$.  Let $G \subset \GL(V)$ 
be a finite subgroup generated by pseudo-reflections, 
$G_p$ be a $p$-Sylow subgroup of $G$, and $V' \subset V$ be 
a minimal (with respect to inclusion) faithful $k$-subrepresentation 
of $G_p$. Then $\dim(V') = \rdim(G_p)$.
\end{cor}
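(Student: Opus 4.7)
The plan is to sandwich $\dim(V')$ between two equal quantities, both of which equal $\rdim(G_p)$, by stringing together the results already at our disposal.

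First, I would observe the easy inequality $\rdim(G_p) \leqslant \dim(V')$. This is immediate from the definition of $\rdim$: since $V'$ is by hypothesis a faithful $k$-representation of $G_p$, its dimension is at least the minimal dimension of any such representation.

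Next, I would produce the reverse inequality $\dim(V') \leqslant \rdim(G_p)$ by applying the Karpenko--Merkurjev formula \eqref{e.km} together with Theorem~\ref{thm.horizontal1}(b). Concretely, Lemma~\ref{lem.main1} supplies a central element $g \in G_p$ of order $p$ with $V' \subset V(g, \zeta_p)$, so $\dim(V') \leqslant \dim V(g, \zeta_p)$. Lemma~\ref{lem:pGroupMaximal} then identifies $\dim V(g, \zeta_p) = a(p)$, Theorem~\ref{thm.horizontal1}(b) gives $a(p) = \ed(G; p)$, and \eqref{e.km} (which applies because $\zeta_p \in k$) gives $\ed(G; p) = \ed(G_p) = \rdim(G_p)$. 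Chaining these yields
\[
\dim(V') \;\leqslant\; \dim V(g, \zeta_p) \;=\; a(p) \;=\; \ed(G; p) \;=\; \rdim(G_p).
\]

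Combining the two inequalities forces $\dim(V') = \rdim(G_p)$. There is no real obstacle here: the work has already been done in establishing Theorem~\ref{thm.horizontal1}(b) and Lemma~\ref{lem:pGroupMaximal}, and the corollary is simply the observation that those results, together with Karpenko--Merkurjev, identify the dimension of a minimal faithful $G_p$-subrepresentation sitting inside the ambient pseudo-reflection representation with the intrinsic invariant $\rdim(G_p)$.
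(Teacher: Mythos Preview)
Your proof is correct and follows essentially the same route as the paper: both arguments sandwich $\dim(V')$ between $\rdim(G_p)$ (trivially) and $a(p)$ (via Lemmas~\ref{lem.main1} and~\ref{lem:pGroupMaximal}), then invoke Theorem~\ref{thm.horizontal1}(b) and the Karpenko--Merkurjev theorem~\eqref{e.km} to collapse the chain of inequalities into equalities.
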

  
\begin{proof}
Since $\zeta_p \in k$, $\rdim(G_p) = \ed(G; p)$ by the Karpenko-Merkurjev 
theorem~\eqref{e.km}.  Choose $g$ as in Lemma~\ref{lem.main1}.  
Then, by Lemma~\ref{lem:pGroupMaximal},
\[ \ed(G; p) = \rdim(G_p) \leqslant \dim(V') \leqslant \dim \, V(g, \zeta_p)
= a(p) \, . \]
By Theorem~\ref{thm.horizontal1}(b), $\ed(G; p) = a(p)$ and 
thus the above inequalities are all equalities.
This completes the proof of Corollary~\ref{cor.main4}.
\end{proof}

The following example shows that Corollary~\ref{cor.main4} fails
if $G \subset \GL(V)$ is not assumed to be generated by pseudo-reflections.

\begin{example} \label{ex.main3} Let $p > 2$ be a prime,
$P$ be a non-abelian group of order $p^3$, and 
$\psi \colon P \hookrightarrow \GL(U)$ 
be a faithful $p$-dimensional representation of $P$.
Set $G = P \times P$ and 
\[ \phi = \psi_1 \otimes \psi_2 \oplus \psi_1 \colon 
G \to \GL(U \otimes U \oplus U) \, , \]
where for $i = 1, 2$, $\psi_i$ is the composition of $\psi$
with the projection $G \to P$ to the $i$th factor.
Both $\psi_1 \otimes \psi_2$ and $\psi_1$ are irreducible
representations of $G$; the irreducibility of
$\psi_1 \otimes \psi_2$ follows from~\cite[Theorem 3.2.10(i)]{serre77}.
These irreducible representations are distinct, because 
$\dim(\psi_1 \otimes \psi_2) = p^2$ and $\dim(\psi_1) = p$.

Note that $G = G_p$ is a group of order $p^6$, 
and $V=U \otimes U \oplus U$ is a faithful representation of $G$.
Since it is a direct sum of two distinct irreducibles, neither 
of which is faithful, the only faithful $G_p$-subrepresentation 
$V'$ of $V$ is $V$ itself.  On the other hand, $G$ has 
a $2p$-dimensional faithful representation $\psi_1 \oplus \psi_2$; 
hence, $\rdim(G) \leqslant 2p$.  In summary, $G = G_p$, $V = V'$ and
$\dim(V') = p^2+p > 2p \geqslant  \rdim(G_p)$.
Thus the assertion of Corollary~\ref{cor.main4} fails for 
$\phi(G) \subset \GL(V)$. 
\end{example}

\section{Proof of Theorem~\ref{thm.main3}(a)}

The degrees of the fundamental invariants 
of $W({\bf E_6})$ are $2, 5, 6, 8, 9$ and $12$; see, 
e.~g., \cite[p.~275]{LTbook}.
Thus by Theorem~\ref{thm.horizontal1}(b), $\ed(W({\bf E_6}); 2) = 4$.
This shows that $\ed( W({\bf E_6}) ) \geqslant 4$.

Recall that $\ed( W({\bf E_6}) )$ is the minimal value of $\dim(Y)$ 
such that there exists a dominant rational 
$W({\bf E_6})$-equivariant map $V \dasharrow Y$ defined over $k$,
where $V$ is a linear representation of $W({\bf E_6})$, and $Y$ is a 
a $k$-variety with a faithful action of $W({\bf E_6})$; see, 
e.g.,~\cite[Section 2]{icm}.  To prove the opposite 
inequality, $\ed( W({\bf E_6}) ) \leqslant 4$, it thus suffices 
to establish the following lemma suggested to us by I.~Dolgachev.

\begin{lem} \label{lem:WE6}
Let $k$ be a field of characteristic $\ne 2, 3$.
There exists a dominant $W({\bf E_6})$-equivariant map
\[ f \colon \bbA^6 \dasharrow Y \ , \]
defined over $k$,
where $\bbA^6$ is a linear representation of $W({\bf E_6})$ 
and $Y$ is a $4$-dimensional variety with
a faithful action of $W({\bf E_6})$.
\end{lem}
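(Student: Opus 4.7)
My approach is to take $Y$ to be a birational model of the moduli space of smooth cubic surfaces in $\bbP^3$ together with a marking of the $27$ lines, a classical $4$-dimensional variety with a natural faithful $W({\bf E_6})$-action. The map $f$ will come from an explicit $6$-parameter family of marked cubic surfaces parametrized by a linear $W({\bf E_6})$-representation.

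For the target, I would realize $Y$ as an open subset of $(\bbP^2)^6/\PGL_3$ parametrizing ordered $6$-tuples of points $(p_1,\dots,p_6)$ in general position (no three collinear, not all lying on a conic); then $\dim Y = 12 - 8 = 4$ and, via the blow-up description $S \simeq \operatorname{Bl}_{p_1,\dots,p_6}\bbP^2$, each point of $Y$ corresponds to a cubic surface with a marking of the $27$ lines. The symmetric group $\Sym_6$ acts on $Y$ by permutation of the points, and by classical work of Coble, Kantor and Dolgachev the full Weyl group $W({\bf E_6})$ acts birationally on $Y$ via iterated Cremona transformations of $\bbP^2$. This extended action is faithful since it induces the tautological faithful $W({\bf E_6})$-action on the configuration of $27$ lines of the associated cubic surface. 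After passing to a suitable $W({\bf E_6})$-equivariant birational model, we may assume the action on $Y$ is regular.

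For the map, I would take $V \cong \bbA^6$ to be the standard reflection representation of $W({\bf E_6})$ and exhibit an explicit $W({\bf E_6})$-equivariant $6$-parameter family of marked cubic surfaces parametrized by $V$, via a classical ``standard form'' for cubic surfaces compatible with the $W({\bf E_6})$-action (for instance, a linear slice of the space of cubic forms on $\bbP^3$ carrying a linear $W({\bf E_6})$-action, or a Cremona--Cayley-type explicit parametrization). The resulting rational map $f \colon V \dasharrow Y$ will be equivariant by construction, and dominance will follow from a dimension count: since $\dim V = 6 > 4 = \dim Y$, it suffices to exhibit a single $v \in V$ where the differential of $f$ is surjective. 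The principal difficulty---and the heart of Dolgachev's suggestion---is the explicit construction of this $W({\bf E_6})$-equivariant linear family; once it is in hand, the remaining verifications (equivariance, dominance, and faithfulness of the $W({\bf E_6})$-action on $Y$) are routine, and $\ed(W({\bf E_6})) \leqslant 4$ then follows directly from the definition of essential dimension.
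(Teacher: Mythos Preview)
Your choice of target $Y$ is exactly right and matches the paper: an open subset of $(\bbP^2)^6/\PGL_3$ with the Cremona--Coble action of $W({\bf E_6})$. The problem is the source. You correctly identify the construction of an equivariant linear family as ``the principal difficulty'' and then do not resolve it; but that construction is the entire content of the lemma, so what you have written is a plan with the one essential step left blank.

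The two concrete suggestions you float do not work as stated. A ``linear slice of the space of cubic forms on $\bbP^3$'' carries no natural $W({\bf E_6})$-action: the Weyl group acts on markings (configurations of lines), not on the cubic form itself, so any map from a space of cubic forms to the moduli of \emph{marked} cubics cannot be $W({\bf E_6})$-equivariant for a linear action on the source. A vague ``Cremona--Cayley-type parametrization'' does not, by itself, produce a \emph{linear} $W({\bf E_6})$-representation; the Coble action on $(\bbP^2)^6/\PGL_3$ is genuinely birational and nonlinear, and linearizing it is precisely the issue.

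The paper's solution, due to Pinkham, is to place the six points on a fixed cuspidal plane cubic $C: yz^2=x^3$. The smooth locus $C_{\rm sm}\simeq\bbG_a$, so $C_{\rm sm}^6\simeq\bbA^6$, and the natural map $C_{\rm sm}^6\to (\bbP^2)^6\dasharrow Y$ is $\Sym_6$-equivariant. The remaining generator of $W({\bf E_6})$ acts on $Y$ by a standard quadratic transformation; the point is that this transformation takes the cuspidal cubic to another cuspidal cubic, which can be moved back to $C$ by an element of $\PGL_3$, and Pinkham shows there is a choice making the induced self-map of $C_{\rm sm}^6$ linear, identifying $\bbA^6$ with a Cartan subalgebra of type ${\bf E_6}$. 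Dominance is not a differential calculation (which would be delicate in positive characteristic) but the classical fact, cited from Shioda, that six points of $\bbP^2$ in general position always lie on a cuspidal cubic. None of this machinery appears in your proposal.
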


\begin{proof}
First, we construct $Y$. Consider the space $(\bbP^2)^6$ of ordered
$6$-tuples of points in the projective plane, and let
$U \subset (\bbP^2)^6$ be the dense open 
consisting of $6$-tuples $(a_1, \ldots, a_6)$ such that no two 
of the points $a_i$ lie on the same line, and no six lie on the same conic. 
This open subset is invariant under the natural (diagonal)  
$\PGL_3$-action on $(\bbP^2)^6$. Moreover, $U$ is contained 
in the stable locus of $(\bbP^2)^6$ for this action; 
see, e.g., \cite[p. 116]{do}. Thus there exists a geometric quotient
$q \colon U \to Y := U/\PGL_3$.  The explicit description
in~\cite[Example I.3]{do} show that
$Y$ and $q$ are defined over $k$.
Note that \[ \dim(Y) = \dim(U) - \dim(\PGL_3) = 
\dim \, (\bbP^2)^6 - \dim(\PGL_3) = 12-8 = 4,  \] 
as desired.

Now, we construct the affine space $\bbA^6$ and its map to $Y$.
Let $x, y, z$ be 
projective coordinates on $\bbP^2$ and $C \subset \bbP^2$  
be the cubic $y z^2 = x^3$. Note that $C$ has a cusp at $(0: 1: 0)$.
The smooth 
locus $C_{\rm sm} = C \setminus \{ (0:1:0) \}$ is an algebraic group  
isomorphic to the additive group $\bbG_a$. Indeed,
we identify $\bbG_a \simeq \mathbb A^1$ with $C_{\rm sm}$
 via $t \mapsto (t: t^3: 1)$.
Thus the space $C_{\rm sm}^6$ is isomorphic to affine space $\bbA^6$.

This yields a rational map
\[ \phi \colon C_{\rm sm}^6 \to C^6 \hookrightarrow (\bbP^2)^6 \, . \] 
Three points $t_1, t_2 , t_3 \in C_{\rm sm}$ lie on a line if and only
if $t_1 + t_2 + t_3 = 0$; six points $t_1, \dots, t_6
\in C_{\rm sm}$ lie on a conic if and only $t_1 + \dots + t_6 = 0$.
Thus for general $(t_1, \dots, t_6) \in C_{\rm sm}^6$,
we have $\phi(t_1, \dots, t_6) \in U$.
In other words, we may view $\phi$ as a rational map 
$C_{\rm sm}^6 \dasharrow U$.
We now define the map $f \colon C_{\rm sm}^6 \dasharrow Y$ as the composition
\[ f \colon C_{\rm sm}^6 \stackrel{\phi}{\dasharrow} U \stackrel{q}{\to} Y \, . \] 
By \cite[Lemma 13]{shioda}, over the algebraic closure,
if $(t_1, \dots, t_6)$ is a $6$-tuple 
of points in general position in $\mathbb{P}^2$ then there is 
a cuspidal cubic $C' \subset \bbP^2$ such that $t_1, \dots, t_6$ 
lie in the smooth locus of $C'$. Since any two cuspidal cubics in $\bbP^2$
are projectively equivalent
(recall our assumptions on the characteristic),
we conclude that $f$ is dominant.

It remains to construct actions of $W({\bf E_6})$ on $\bbA^6$ and $Y$, 
and to show that $f$ is equivariant.
Recall that blowing up 6 points in $\bbP^2$ produces a cubic surface
$X$ with the 6 exceptional divisors of the blow-up corresponding to
a ``sixer'': 6 pairwise disjoint lines in $X$.
Conversely, any sixer can be blown down to produce 6 points on $\bbP^2$.
Over an algebraically closed field,
the elements of $W({\bf E_6})$ act freely and transitively on the set of
sixers in $X$ (where we keep track of the ordering of the 6 lines).
This produces a faithful action of $W({\bf E_6})$ on $Y$
which is defined over $k$.
This action of the Weyl group $W({\bf E_6})$ on $Y$ is sometimes
called the {\em Cremona representation} or the {\em Coble representation}.
For more details, see~\cite[Section 7]{dol83}, \cite[Section 6]{dol08},
and~\cite[Chapter 6]{do}.

We recall how $W({\bf E_6})$ acts on the Picard group $N$ of a
smooth cubic surface $X \subset \bbP^3$
over an algebraically closed field;
see, e.g., \cite[Sections 4 and 5]{dol83}
or~\cite[Section 26]{manin}. 
The Picard group $N \simeq \bbZ^7$ with its intersection form
is a lattice with a symmetric
bilinear form given by $\diag(1,-1,\ldots,-1)$ with respect to the
basis $e_0, \ldots, e_6$,
where $e_0$ is the hyperplane section of $X$ and
$e_1, \ldots, e_6$ is a collection of 6 mutually disjoint lines on $X$.

Consider a set of fundamental roots in $N$ given by
\[ \alpha_1 = e_0-e_1-e_2-e_3,
\quad \alpha_{2}=e_{2}-e_1,
\quad \ldots
\quad \alpha_{6}=e_{6}-e_5 \ . \]
The reflections associated to these roots generate a group isomorphic
to $W({\bf E_6})$. (Note that $\alpha_1, \ldots, \alpha_6$ are 
the same as the fundamental roots used by I.~Dolgachev in~\cite{dol83}, 
up to reordering, and as the fundamental roots used by Yu.~Manin 
in~\cite{manin}, up to sign; see~\cite[Proof of Proposition 25.2]{manin}.)
The reflections 
associated to $\alpha_2, \ldots, \alpha_6$
generate a subgroup isomorphic to $\Sym_6$ which permutes
the basis elements $e_1, \ldots, e_6$.
The symmetric group $\Sym_6$ naturally acts on $C_{\rm sm}^6$
and $(\bbP^2)^6$ by permutations; thus $f$ is $\Sym_6$-equivariant.
It remains to consider the reflection $g \in W({\bf E_6})$ associated to
the root $\alpha_1$.

First, we identify the action of $g$ on $Y$.  Suppose
$\pi : X \to \bbP^2$ is the blowup of 6 points $a_1, \ldots, a_6$.
Identifying each $e_i$ with the class of each exceptional divisor
$E_i := \pi^{-1}(a_i)$ in the cubic surface $X$ we may determine the
action of $g$.  Indeed, for $i \ne j \ne k$ taken from $\{1,2,3\}$,
the line $E_i$ is taken to the strict transform of
the line between $a_j$ and $a_k$; while $E_4$, $E_5$, $E_6$ are all left
fixed.
Recall that the standard quadratic transform $s : \bbP^2 \dasharrow
\bbP^2$ at the points $a_1, a_2, a_3$ is the map obtained by blowing up
the points and then blowing down the strict transforms of the lines
between them.  In this language, $g : Y \to Y$ is given by
\[ [a_1, \ldots, a_6] \mapsto
[s(a'_1),s(a'_2),s(a'_3),s(a_4),s(a_5),s(a_6)] \]
where
$a'_1$ is any point on the line between $a_2$ and $a_3$ (and similarly
for $a'_2$ and $a'_3$).

We now construct an action of $g$ on $C_{\rm sm}^6$ 
following H.~Pinkham~\cite{pinkham}.
If $C \subset \bbP^2$ is a cuspidal cubic, then, for any three points
$u_1$, $u_2$ and $u_3$ in the smooth locus $C_{\rm sm}$ of $C$,
$C'=s(C)$ is also a cuspidal cubic in $\bbP^2$. Since any two cuspidal cubics
in $\bbP^2$ are linear translates of each other, there exists an $l \in \PGL_3$
such that $l(C') = C$.  Composing $s$ with $l$, one obtains 
a rational map $l \cdot s : C_{\rm sm} \dasharrow C_{\rm sm}$ which 
is regular on $C_{\rm sm} \setminus \{ u_1, u_2, u_3 \}$.
Let $u_1'$ be the unique third intersection point of $C$ with the line
passing through $u_2$ and $u_3$ (and similarly for $u'_2$ and $u'_3$).
We define a map $g : C_{\rm sm}^6 \to C_{\rm sm}^6$ via
\[ (u_1, \ldots, u_6) \to
(l \cdot s (u'_1), l \cdot s(u'_2), l \cdot s(u'_3), l \cdot s(u_4), 
l \cdot s(u_5), l \cdot s(u_6)) \ . \]
By construction, we see that $f$ is $g$-equivariant.

Note that the choice of $l$ and thus of the map 
$l \cdot s : C_{\rm sm} \dasharrow C_{\rm sm}$ above is not unique.
Pinkham's observation~\cite[pp. 196--197]{pinkham}
is that there is a choice of $l$ such that the resulting map
$g$ gives rise to a linear representation of 
$W({\bf E_6}) = \langle g, \Sym_6 \rangle$ on $C_{\rm sm}^6 \simeq \bbA^6$.
In fact, $C_{\rm sm}^6$ can be identified with a Cartan subalgebra
of the Lie algebra of type ${\bf E_6}$ with the standard action of the
Weyl group.
This construction is valid over any
field $k$ of characteristic $\ne 2, 3$.
This completes the proof of Lemma~\ref{lem:WE6} and thus
of Theorem~\ref{thm.main3}(a).
\end{proof}

\section{Proof of Theorem~\ref{thm.main3}(b) and (c)}

As we have previously pointed out,
$\ed(G) \leqslant \dim(V)$; see, e.g.,~\cite[(2.3)]{icm}.
In the case where $G = G(m, m, n)$ and $m \geqslant 2$ and $(m, n)$ are 
relatively prime, no element of $G$ acts as a scalar on $V$.
The natural $G$-equivariant dominant rational map $V \dasharrow \bbP(V)$
tells us that $\ed(G) \leqslant \dim(V) - 1$.

It now suffices to show that for every irreducible $G \subset \GL(V)$
generated by pseudo-reflections there exists a prime $p$
such that
\[ a(p) =  \begin{cases} \dim(V)-1,& \text{if $G \simeq G(m,m,n)$ with
$m$, $n$ relatively prime,} \\
\dim(V),& \text{otherwise.}
\end{cases} \]
Indeed, Theorem~\ref{thm.horizontal1}(b) will then tell us that
$\ed(G) \geqslant \ed(G; p) \geqslant a(p) \geqslant \dim(V) - 1$ in the first case and
$\ed(G) \geqslant \ed(G;p) \geqslant a(p) \geqslant \dim(V)$ in the second. 
Since we 
have established the opposite inequalities, this will complete 
the proof in both cases.

By Springer's theorem~\eqref{e.springer},
$a(p)$ is equal to the number of invariant degrees $d_i$
which are divisible by $p$.  In the case
where $G = G(m, m, n)$, $m \geqslant 2$ and $(m, n)$ are 
relatively prime, the degrees $d_i$ are $m, 2m, \dots, (n-1)m$,
and $n$. Taking $p$ to be a prime divisor of $m$, we see that 
$a(p) = n-1 = \dim(V) - 1$, as desired.

For all other groups of the form $G = G(m, l, n)$, with $m \geqslant 2$ the degrees 
$d_i$ are $m, 2m, \dots, (n-1)m$, and $\dfrac{mn}{l}$. All of them
are divisible by every prime factor $p$ of $\gcd(m,\frac{mn}{l}) > 1$. 
Hence, in this case $a(p) = n = \dim(V)$, as desired.

Finally, in the case where $m = 1$, $G(m, l, n) = G(1, 1, n) = \Sym_n$ is
excluded by our hypothesis.

This leaves us with the exceptional groups $ST_4$ -- $ST_{37}$.
If $G \ne \ST_{25}, \ST_{35}$ then every 
degree $d_i$ of $G$ is divisible by $2$.  If $G = \ST_{25}$ then every 
degree $d_i$ of $G$ is divisible by $3$.
Finally, $\ST_{35} = W({\bf E_6})$ was treated in part (a).
\qed

\medskip
\begin{remark} Our proof shows that for every $G$ 
in the statement of Theorem~\ref{thm.main3}
there is a prime $p$ such that $\ed(G) = a(p) = \ed(G; p)$.
\end{remark}

\begin{remark}
Pinkham's construction applies in greater generality than
the case of $W({\bf E_6})$ used in Lemma~\ref{lem:WE6}.  
In particular, one can use it to construct a dominant rational
$W({\bf E_7})$-equivariant map $\bbA^7 \dasharrow Z$, where $Z$ is a
dense open subset of the $6$-dimensional variety $(\bbP^2)^7_{ss}//\PGL_3$. 
Here the subscript $_{ss}$ denotes the semistable locus.
Since we know that $\ed(W({\bf E_7})) = 7$ by Theorem~\ref{thm.main3}(c),
this gives an alternative (indirect) proof of the classical fact that
the Coble representation of $W({\bf E_7})$ on 
$(\bbP^2)^7_{ss}//\PGL_3$ is not faithful; see~\cite[p. 293]{dol83} 
or~\cite[p. 122]{do}.
\end{remark}

\section{A variant of Bertini's theorem} 
\label{sect.bertini}

Our proof of Theorem~\ref{thm.horizontal2} will rely on
the following variant of Bertini's theorem.

\begin{thm} \label{thm.bertini}
Let $Y$ be a smooth, geometrically irreducible subscheme of the space
$\bbP^N := \operatorname{Proj} (k[y_0, \dots, y_N])$, 
$C \subset Y$ be a smooth $0$-dimensional closed subscheme of $Y$,
$X$ be a geometrically irreducible variety, and
$\psi \colon X \to Y$ be a smooth morphism,
all defined over $k$. Assume that $\dim(Y) \geqslant 2$.
When $k$ is an infinite field of positive
characteristic, we also assume that $\psi$ is \'etale.

Given a homogeneous polynomial $f \in k[y_0, \ldots, y_N]$, 
let $\Yfnote{Y}{f}$ be
the intersection of $Y$ with the
hypersurface $\{ f=0 \}$
and let $\Yfnote{X}{f}$ denote the preimage of $\Yfnote{Y}{f}$ under $\psi$.
Then for $a \gg 0$ there exists a homogeneous polynomial $f$ of 
degree $a$ satisfying the following conditions:

\smallskip
(i) $\Yfnote{X}{f}$ is geometrically irreducible,

\smallskip
(ii) $\Yfnote{Y}{f}$ is smooth,

\smallskip
(iii) $\Yfnote{Y}{f}$ contains $C$,

\smallskip
(iv) $\dim(\Yfnote{X}{f})=\dim(X)-1$.
\end{thm}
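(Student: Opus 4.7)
The plan is to construct $f$ inside the linear subsystem
\[ V_a := \{\, f \in H^0(\bbP^N, \mathcal{O}(a)) : f|_C = 0 \,\} \]
of degree-$a$ hypersurfaces containing $C$, so that condition (iii) is built in, and to force (i), (ii), (iv) by a suitable Bertini-type argument applied to $V_a$. A preliminary step, valid in any characteristic, is to show that for $a \gg 0$ the restriction $V_a \to H^0(Y, \mathcal{O}_Y(a) \otimes \calI_{C})$ is surjective, that the resulting linear system on $Y$ is base-point free on $Y \setminus C$, and that the first-order evaluation $V_a \to \mathfrak{m}_{C,Y}/\mathfrak{m}_{C,Y}^2$ is surjective; this is an exercise using Serre vanishing and the smoothness of $C$.

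When $k$ is infinite I would apply the classical Bertini smoothness theorem to the base-point-free linear system on $Y \setminus C$ cut out by $V_a$: a general member yields a smooth $\Yfnote{Y}{f}$ on $Y \setminus C$, and the conormal surjectivity just established guarantees transversality along $C$, giving (ii). Smoothness of $\psi$ then gives (iv) automatically via $\dim \Yfnote{X}{f} = \dim \Yfnote{Y}{f} + (\dim X - \dim Y)$. For (i), I would invoke Bertini's irreducibility theorem for the composition $X \xrightarrow{\psi} Y \hookrightarrow \bbP^N$; in characteristic zero this is Jouanolou's classical result, while in positive characteristic the extra hypothesis that $\psi$ is \'etale is exactly what is needed to rule out the inseparability-based counterexamples and to conclude that a generic member of $V_a$ has geometrically irreducible pre-image in $X$.

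When $k$ is finite, ``general $f$'' must be replaced by density arguments. Inside the finite-dimensional $\bbF_q$-vector space $V_a$, Poonen's Bertini smoothness theorems~\cite{poonen1, poonen2} show that the proportion of $f \in V_a$ such that $\Yfnote{Y}{f}$ is smooth outside $C$ converges, as $a \to \infty$, to a positive product of local densities; a separate finite sieve at the points of $C$ (using the conormal surjectivity) enforces transversality along $C$, giving (ii) and then (iv). Independently, the Charles--Poonen irreducibility theorem~\cite{CPbertini} implies that the density of $f \in V_a$ for which $\psi^{-1}(\Yfnote{Y}{f})$ is geometrically irreducible tends to $1$ as $a \to \infty$. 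Intersecting these positive-density sets produces the required $f$ for every sufficiently large $a$.

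The main obstacle is (i) in the finite-field case: the existence of degree-$a$ hypersurface sections of $Y$ whose pullback to $X$ stays geometrically irreducible is not accessible by the classical Bertini machinery and relies squarely on the Charles--Poonen irreducibility theorem, which was motivated by precisely this application. The only adaptation required beyond the literature is to run all of these Bertini arguments in the sub-linear system $V_a$ rather than in $H^0(\bbP^N, \mathcal{O}(a))$, which is routine since $C$ imposes only finitely many independent linear conditions on $V_a$ for $a \gg 0$.
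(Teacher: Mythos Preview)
Your proposal is correct and follows essentially the same route as the paper: Jouanolou's Bertini smoothness and irreducibility theorems applied to the sub-linear system $\calI_a$ in the infinite case, and Poonen's smoothness theorem together with the Charles--Poonen irreducibility theorem combined via a density-intersection argument in the finite case. The only cosmetic difference is that the paper applies Charles--Poonen in the full $\calS_a$ (obtaining density $1$) and Poonen's containing-a-subscheme theorem directly for $\calP_2$ (which already handles smoothness at $C$, so no separate sieve is needed), then intersects via an elementary lemma, rather than restricting everything to $V_a$ from the outset.
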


In the case where $k$ is infinite, Theorem~\ref{thm.bertini} 
can be deduced from the classical Bertini theorem. In the situation 
where $X = Y$ and $\psi = \operatorname{id}$, this is done 
in~\cite{kleiman-altman}. A similar argument can be used to
prove Theorem~\ref{thm.bertini} in full generality (here $k$ is 
still assumed to be infinite). For the sake of completeness
we briefly outline this argument below.

\begin{proof}[Proof of Theorem~\ref{thm.bertini} in the case 
where $k$ is an infinite field] 
Denote the ideal of $C \subset \bbP^N$
by $\calI \subset k[y_0, \dots, y_N]$. Let
$\calI_a$ be the homogeneous part of $\calI$ of degree $a$.
For $f \in \calI_a$ in general position,
$\Yfnote{Y}{f}$ is smooth at $C$ and of dimension $\dim(Y) - 1$. Now consider the map
\[ \phi_a \colon X \setminus \psi^{-1}(C) \to \bbP(\calI_a) \]
obtained by composing $\psi$ with the morphism 
$\iota \colon Y \setminus C \to \bbP(\calI_a)$, given by the linear 
system of degree $a$ hypersurfaces passing through $C$. 
(Note that $\iota$ is an embedding for $a \gg 0$.) By Bertini's 
Smoothness Theorem~\cite[Corollaire 6.11(2)]{Jou83Theoremes}, 
for $f \in \calI_a$ in general position, $\Yfnote{Y}{f}$ is smooth
away from $C$. Since $\Yfnote{Y}{f}$ is also smooth at $C$,
we conclude that $\Yfnote{Y}{f}$ is smooth,
every irreducible component of $\Yfnote{Y}{f}$ is of dimension 
$\dim(Y) - 1$ and hence, every irreducible component of
$\Yfnote{X}{f}$ is smooth of dimension $\dim(X) - 1$.
By Bertini's Irreducibility Theorem~\cite[Corollaire 6.11(3)]{Jou83Theoremes}, 
for $f \in \calI_a$ in general position, $\Yfnote{X}{f} \setminus \psi^{-1}(C)$ 
is geometrically irreducible of dimension
$\dim(X) - 1$.  (This is where the assumption that $\psi$ is
\'etale is used when $k$ is of positive characteristic.)
Since $\dim(Y) \geqslant 2$, we have
$\dim(X)-\dim(Y) \leqslant \dim(X) - 2$ and
thus $\psi^{-1}(C)$ cannot contain a component of $\Yfnote{X}{f}$. 
Hence, $\Yfnote{X}{f}$ itself is geometrically irreducible.
This completes the proof of Theorem~\ref{thm.bertini} in the 
case where $k$ is infinite.
\end{proof}

If $k$ is a finite field, the classical Bertini theorems break down.
In this case our proof will be based on the
probabilistic versions of Bertini's smoothness and
irreducibility theorems, due to B.~Poonen~\cite{poonen2} and 
F.~Charles and B.~Poonen~\cite{CPbertini}, respectively.

We begin by recalling the notion of density from~\cite{poonen1}.  
Let $\calS = k[y_0, \dots, y_N]$ be the homogeneous coordinate 
ring of $\bbP^N$, $\calS_a \subset \calS$ be the $k$-vector 
subspace of homogeneous polynomials of degree $a$, and
$\calS_{\rm hom} = \cup_{a \ge 0} \,  \calS_a$.
The {\em density} $\mu(\calP)$ of any subset 
$\calP \subset \calS_{\rm hom}$ is defined as
\[ \mu(\calP) := \lim_{a \to \infty} \frac{|\calP \cap 
\calS_a|}{|\calS_a|} \, . \]
Note $\mu(\calP)$ is either a real number between $0$ and $1$
or undefined (if the above limit does not exist).

\begin{lem} \label{lem.density} Suppose 
$\calP_1, \calP_2 \subset \calS_{\rm hom}$. If
$\mu(\calP_1) = 1$ then
$\mu(\calP_1 \cap \calP_2) = \mu(\calP_2)$.
\end{lem}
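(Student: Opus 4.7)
The plan is to use a simple inclusion-exclusion / squeeze argument on the defining limits.

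First, I would decompose $\calP_2$ as the disjoint union
\[
\calP_2 = (\calP_1 \cap \calP_2) \sqcup (\calP_2 \setminus \calP_1),
\]
so for every $a \ge 0$,
\[
|\calP_2 \cap \calS_a| = |\calP_1 \cap \calP_2 \cap \calS_a| + |(\calP_2 \setminus \calP_1) \cap \calS_a|.
\]
Dividing by $|\calS_a|$ reduces the lemma to showing that the density of the ``error term'' $\calP_2 \setminus \calP_1$ is $0$.

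Next, I would observe that $\calP_2 \setminus \calP_1 \subseteq \calS_{\mathrm{hom}} \setminus \calP_1$, so
\[
0 \le \frac{|(\calP_2 \setminus \calP_1) \cap \calS_a|}{|\calS_a|} \le 1 - \frac{|\calP_1 \cap \calS_a|}{|\calS_a|}.
\]
By hypothesis $\mu(\calP_1) = 1$, which means the right-hand side tends to $0$ as $a \to \infty$. Hence $\mu(\calP_2 \setminus \calP_1)$ exists and equals $0$.

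Finally, I would combine these two observations: the ratios $|\calP_2 \cap \calS_a|/|\calS_a|$ and $|\calP_1 \cap \calP_2 \cap \calS_a|/|\calS_a|$ differ by a quantity that goes to $0$, so their $\liminf$'s and $\limsup$'s coincide. Consequently one of the limits $\mu(\calP_2)$, $\mu(\calP_1 \cap \calP_2)$ exists if and only if the other does, and in that case they are equal. There is no real obstacle here — the argument is a one-line squeeze once the decomposition is written down — so the only thing to be careful about is phrasing the conclusion in a way that covers the case where neither density is a priori known to exist.
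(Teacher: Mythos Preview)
Your argument is correct and is essentially the same squeeze argument as the paper's: the paper writes it as the sandwich $|\calP_2 \cap \calS_a| - |\calS_a \setminus \calP_1| \leqslant |\calP_1 \cap \calP_2 \cap \calS_a| \leqslant |\calP_2 \cap \calS_a|$ and uses $\mu(\calP_1)=1$ to kill the error term, which is exactly your decomposition phrased as a two-sided inequality.
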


\begin{proof} The lemma is a consequence of the inequalities
\[ |\calP_2 \cap \calS_a| - |\calS_a \setminus \calP_1| \leqslant 
|\calP_1 \cap \calP_2 \cap \calS_a| \leqslant
|\calP_2 \cap \calS_a| \, , \]
since 
$\lim_{a \to \infty} \dfrac{|\calS_a \setminus \calP_1|}{|\calS_a|} = 0$. 
\end{proof}

\begin{proof}[Proof of Theorem~\ref{thm.bertini} in the case 
where $k$ is a finite field]
Let $\calS := k[y_0, \dots, y_N]$ and $\calI$ be the ideal in
$\calS$ corresponding to $C \subset \bbP^N$;
and let $\calS_{\rm hom}$, 
$\calI_{\rm hom}$ be the set of homogeneous polynomials in 
$\calS$, $\calI$, respectively. 

We define $\calP_1$ as the set 
of $f \in \calS_{\rm hom}$ such 
that $\Yfnote{X}{f}$ is geometrically irreducible, and $\calP_2$ as the set 
of $f \in \calI_{\rm hom}$ 
such that $\Yfnote{Y}{f}$ is smooth and $\dim(\Yfnote{Y}{f}) = \dim(Y)-1$.
Thus $\calP_1 \cap \calP_2$
is precisely the set of homogeneous polynomials satisfying 
conditions (i), (ii), (iii) and (iv) of the theorem. 
Our goal is to show that $\mu(\calP_1 \cap \calP_2)$ exists and
is $ > 0$. If we can prove this, the theorem will immediately follow.  

Since we are assuming that all fibers of $\psi$ have dimension 
$\le \dim(X) - 2$, no irreducible component of $\Yfnote{X}{f}$ 
can be contained in
a fiber of $\psi$.  Thus, by \cite[Theorem 1.6]{CPbertini}, 
\[ \mu(\calP_1) = 1 \, . \]
Note that here we use the assumption that $\dim(Y) \ge 2$. 
On the other hand, by \cite[Theorem 1.1]{poonen2}, the local density 
\[ \mu_C(\mathcal{P}_2) = 
\lim_{a \to \infty} \frac{|\calP_2 \cap \calI_a|}{|\calI_a|} 
\; \; \text{exists and is $> 0$.} \]
(This uses our assumptions that $C$ is smooth and $0$-dimensional. 
In particular,  $\dim(X) > 2 \dim(C)$.)
Since $C$ is a zero-dimensional subscheme of $\bbP^n$,
we have $\dim_k(\calI_a) = \dim_k(\calS_a) - \deg(C)$,
for large $a$. Here $\deg(C)$ denotes the degree of $C$ in $\bbP^n$. Thus
\[ \lim_{a \to \infty} \frac{|\calI_a|}{|\calS_a|} = |k|^{-\deg(C)} > 0 \, . \]
Since $\calP_2$ is, by definition, a subset of $\calI_{\rm hom}$, we
have $\calP_2 \cap \calI_a = \calP_2 \cap \calS_a$ and thus 
\[ \mu(\mathcal{P}_2) = 
\lim_{a \to \infty} \frac{|\calP_2 \cap \calS_a|}{|\calS_a|} = 
\lim_{a \to \infty} \frac{|\calP_2 \cap \calI_a|}{|\calI_a|} \cdot 
\frac{|\calI_a|}{|\calS_a|}  \; \;
\text{also exists and is $> 0$.} \]  
Lemma~\ref{lem.density} now tells us that
$\mu(\calP_1 \cap \calP_2)$ exists and is $> 0$, as desired.
\end{proof}

\section{Proof of Theorem~\ref{thm.horizontal2}: preliminaries}

First we observe that part (b) is an immediate consequence of part (a).
Indeed, combining the first inequality in~\eqref{e.obvious} with
part (a), we have
\[ \max_p \, \ed(G; p) \leqslant \pmed(G) \leqslant \max_p \, a(p) \, , \]
Theorem~\ref{thm.horizontal1}(b) now tells us that $a(p) = \ed(G; p)$
for each prime $p$, and part (b) follows.

 From now on we will focus on the proof of Theorem~\ref{thm.horizontal2}(a).
Let $G$ be a finite group and
$G \hookrightarrow \GL(V)$ be a faithful linear representation 
defined over $k$.
We will assume throughout that $\Char(k)$ does not divide $|G|$.
Consider the closed subscheme
\[ B := \bigcup_{\scriptstyle  g \in G, \; \zeta \ne 1 } \, 
V(g, \zeta)
\quad
\text{or equivalently,}
\quad  
B = \bigcup_{
\scriptstyle
\begin{array}{c} \scriptstyle g \in G, \; \zeta^p = 1 \\ 
 \scriptstyle \zeta \ne 1, \;   
\text{ $p$ prime}
\end{array}} 
\,  V(g, \zeta) \, , \]
where $\zeta$ ranges over the roots of unity in $\bar{k}$.
Note that, although each $V(g, \zeta)$ is defined only over
$k(\zeta)$, their union $B$ is defined over $k$.

The following lemma may be viewed as a variant 
of~\cite[Proposition 3.2]{Spr74Regular}. 

\begin{lem} \label{lem.invariant1}
Let $m \ge |G|$ be an integer.  Suppose $v \in V$ 
has the property that $f(v) = 0$ for every $G$-invariant
homogeneous polynomial $f \in k[V]$ of degree $m$.  Then $v \in B$.
\end{lem}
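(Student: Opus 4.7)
The plan is to argue the contrapositive: given $v \notin B$, construct a homogeneous $F \in k[V]^G_m$ with $F(v) \ne 0$. First I would reduce to the case $k = \bar k$: since $\Char(k) \nmid |G|$, taking $G$-invariants commutes with base change, so $\bar k[V]^G_m = k[V]^G_m \otimes_k \bar k$, and any $\bar k$-invariant nonvanishing at the $k$-point $v$ forces some $k$-basis element of $k[V]^G_m$ to be nonvanishing at $v$.

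Next, I would extract the geometric content of $v \notin B$. Let $H := \Stab_G(v)$, let $r := [G:H]$, and write $Gv = \{v_1, \ldots, v_r\}$ with $v_1 = v$. The key claim is that the $r$ lines $k v_1, \ldots, k v_r$ are pairwise distinct. Indeed, if $[v_i] = [v_j]$, writing $v_i = g_i v$ and $v_j = g_j v$ yields $g_i^{-1}g_j \cdot v = \lambda v$ for some $\lambda$; since $g_i^{-1}g_j$ has finite order, $\lambda$ is a root of unity, and the case $\lambda \ne 1$ would place $v \in V(g_i^{-1}g_j, \lambda) \subseteq B$. Hence $\lambda = 1$, forcing $g_i^{-1}g_j \in H$ and $v_i = v_j$.

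With distinctness in hand I would construct a degree-$m$ ``separator'' polynomial $f \in \bar k[V]$ with $f(v_1) \ne 0$ and $f(v_i) = 0$ for $i \ge 2$. For each $i \ge 2$, distinctness of $[v_1]$ and $[v_i]$ in $\mathbb{P}(V)$ lets me pick $\ell_i \in V^*$ with $\ell_i(v_i) = 0$ and $\ell_i(v_1) \ne 0$, and I also pick $h \in V^*$ with $h(v_1) \ne 0$. Setting
\[
f := \ell_2 \ell_3 \cdots \ell_r \cdot h^{m - r + 1}
\]
works: the hypothesis $m \ge |G| \ge r$ makes $m - r + 1 \ge 1$, so $\deg f = (r-1) + (m-r+1) = m$, and the prescribed vanishing/nonvanishing at the $v_i$ is immediate.

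Finally I would symmetrize. Set
\[
F(x) := \frac{1}{|G|} \sum_{g \in G} f(g^{-1} x) \in \bar k[V]^G_m,
\]
which is legitimate since $\Char(k) \nmid |G|$. Partitioning $G$ by the value of $g^{-1}v$ and noting that each orbit point is hit by exactly $|H|$ elements $g \in G$, one finds $F(v) = \frac{|H|}{|G|} \sum_{i=1}^r f(v_i) = f(v_1)/r \ne 0$, and descending gives the required invariant over $k$. The conceptual crux is the distinct-lines reduction, where $v \notin B$ is used essentially; the rest is a routine construct-and-average, with the hypothesis $m \ge |G|$ entering only as $m \ge r-1$ to accommodate the degree of $f$.
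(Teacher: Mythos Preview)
Your argument is correct and follows essentially the same route as the paper: build a degree-$m$ separator for the orbit via a product of linear forms, then average over $G$. The only difference is organizational---the paper argues directly (working with the projective orbit of $v$ and showing that vanishing of the averaged invariant forces the character $\chi$ of the projective stabilizer to be nontrivial, hence $v\in B$), whereas you argue the contrapositive (using $v\notin B$ to show the projective stabilizer equals $\Stab_G(v)$, so the average is $f(v_1)/r\neq 0$); the reduction to $\bar k$ is harmless but unnecessary, since a $k$-point $v$ with $gv=\lambda v$ already has $\lambda\in k$.
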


\begin{proof} We may assume $v \ne 0$. Let 
$\overline{v} \in \bbP(V)$ be the projective point associated to $v$. 
Denote the $G$-orbit of $\overline{v}$ by $\overline{v}_1 = \overline{v}$, 
$\overline{v}_2, \dots, \overline{v}_r \in \bbP(V)$. Note that 
$r \le |G| \le m$. 

We claim that there exists a homogeneous polynomial $h \in k[V]$ 
of degree $m$ such that $h(\overline{v}_1) \ne 0$ but 
$h(\overline{v}_i) = 0$ for any $i = 2, \dots, r$.
To construct $h$, for every $i = 2, \dots, r$ choose a linear form 
$l_i \in V^*$ such that $l_i(\overline{v}_i) = 0$ but 
$l_i(\overline{v}_1) \ne 0$. 
Now set $h = l_2^{m + 2 - r} l_3 \dots l_r$. This proves the claim.

We now define a $G$-invariant homogeneous polynomial $f$ of degree $m$ 
by summing the translates of $h$ over $G$:
\begin{equation} \label{e.average}
f(v') = \sum_{g \in G} h(g \cdot v') \quad \forall v' \in V \, .
\end{equation}
By our assumption, $f(v) = 0$.

Let $S \subset G$ be the stabilizer of $\overline{v}$, i.e., the subgroup of
elements $s \in G$ such that $v$ is an eigenvector for $s$. Then
$s(v) = \chi(s) v$ for some multiplicative character $\chi \colon S \to k^*$.
It now suffices to show that $\chi(s) \ne 1$ for some $s \in S$.
Indeed, if we denote $\chi(s)$ by $\zeta$, for this $s$, then
$v \in V(s, \zeta) \subset B$, as desired. 

To show that $\chi(s) \ne 1$ for some $s \in S$, recall that
by our choice of $h$, $h(g \cdot v) = 0$ unless $g \in S$.
Thus 
\[ 0 = f(v) = \sum_{s \in S} \, h(s \cdot v) = 
\sum_{s \in S} \, h(\chi(s) v) =  
 \sum_{s \in S} \, \chi(s)^m  h(v) \, . \]
If $\chi(s) = 1$ for every $s \in S$, this yields
$0 = |S| \cdot h(v)$. This is a contradiction since $h(v) \ne 0$, and
we are assuming that $\Char(k)$ does not divide $|G|$.
Thus $\chi(s) \ne 1$ for some $s \in S$, as claimed.
\end{proof}

Denote the direct sum of $V$ and the trivial $1$-dimensional 
representation of $G$ by $W := V \times k$. 
Let $z$ be the coordinate along the second
factor in $W = V \times k$. We will identify 
$V$ with the open subvariety of $\bbP(W)$ given by $z \ne 0$, and
$\bbP(V)$ with the closed subvariety of $\bbP(W)$ given by $z = 0$.
Set $n := \dim(V)= \dim(\bbP(W))$.
If $C$ is a cone in $V$ with vertex at the origin,
we will denote by $\bbP(C)$ the image of 
$C \setminus \{ 0 \}$ under the natural projection
$(V \setminus \{ 0 \}) \to \bbP(V)$.

\begin{prop} \label{prop.invariant3} 
Consider the rational map
\[ \psi_m \colon \bbP(W) \dasharrow \bbP^N \]
given by the linear system $k[W]^G_m$ of $G$-invariant 
homogeneous polynomials of degree $m$ on $W$.
Denote the closure of the image of $\psi_m$ by 
$Y \subset \bbP^N$.
Assume $m \geqslant |G|$.  Then:

\smallskip
(a) The map $\psi_m$ is regular away from $\bbP(B)$.

\smallskip
(b) $\psi_m \colon \bbP(W) \dasharrow Y$ 
induces an isomorphism between $k(Y)$ and
the field of $G$-invariant rational functions on $\bbP(W)$.

\smallskip
(c) For a prime $q \gg 0$,
every fiber of the morphism $\psi_q \colon \bbP(W \setminus B) \to Y$ 
is finite.
\end{prop}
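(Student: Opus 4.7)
The proof splits into three parts: (a) follows directly from Lemma~\ref{lem.invariant1}, (b) is the main computation using Noether's bound, and (c) involves a fiber analysis that splits into an affine and a boundary case.

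For part (a), the plan is to apply Lemma~\ref{lem.invariant1} to the $G$-representation $W = V \oplus k$. Since $G$ acts trivially on the extra summand, every non-trivial eigenspace $W(g,\zeta)$ with $\zeta \neq 1$ already lies in $V$, so the analogous ``bad locus'' for $W$ coincides with $B$. The lemma, applied with $m \geq |G|$, then shows that the base locus of the linear system $k[W]^G_m$ in $\bbP(W)$ is contained in $\bbP(B)$, so $\psi_m$ is regular on the complement.

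For part (b), by construction $\psi_m$ is $G$-invariant, giving the inclusion $k(Y) \subseteq k(\bbP(W))^G$; the task is to establish the reverse inclusion. The key observation is that $k[W]^G = k[V]^G[z]$, where $z$, the coordinate on the trivial summand, is a $G$-invariant of degree $1$. By Noether's bound, $k[V]^G$ is generated by homogeneous invariants $r_1, \ldots, r_k$ of degrees $e_i \leq |G| \leq m$. I would then establish the monomial identity
\[
z^{b_0+a}\, r_1^{b_1}\cdots r_k^{b_k} \;=\; (z^m)^{(d+a)/m - \sum_i b_i}\cdot \prod_{i=1}^{k} (z^{m-e_i} r_i)^{b_i},
\]
valid whenever $d = b_0 + \sum_i b_i e_i$, $a+d$ is a positive multiple of $m$, and $(d+a)/m \geq \sum_i b_i$. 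The last condition holds for $a \geq d(m-1)$ since $\sum_i b_i \leq d$, and each factor on the right lies in $k[W]^G_m$. Thus for any homogeneous $\phi \in k[W]^G_d$, the product $\phi z^a$ lies in the subring $k[k[W]^G_m] \subset k[W]^G$ for $a$ sufficiently large with $a \equiv -d \pmod m$. Any invariant rational function $\phi_1/\phi_2$ with $\phi_1, \phi_2 \in k[W]^G_d$ can then be rewritten as $(\phi_1 z^a)/(\phi_2 z^a) \in k(Y)$.

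For part (c), I would analyze fibers of the affine lift $\Psi_q \colon W \to \bbA^{N+1}$, $w \mapsto (f_0(w), \ldots, f_N(w))$, splitting the domain according to whether $z$ vanishes. On the affine locus $\{z \neq 0\}$, a fiber of $\psi_q$ through $[w]$ consists of $[w']$ with $\Psi_q(w') = \lambda\, \Psi_q(w)$; writing $\lambda = \mu^q$ over $\bar k$ reduces the problem to $\Psi_q(w') = \Psi_q(\mu w)$. Using $z^q \in k[W]^G_q$, one finds $z(w') = \eta\, z(\mu w)$ for some $q$-th root of unity $\eta$, and the rescaled point $w'' = \eta^{-1} w'$ satisfies $z(w'') = z(\mu w)$ together with $\Psi_q(w'') = \Psi_q(\mu w)$. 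For every homogeneous $f \in k[W]^G_d$ with $d \leq q$, the invariant $f z^{q-d} \in k[W]^G_q$ yields $f(w'') = f(\mu w)$, and Noether's bound (with $q \geq |G|$) extends this to all of $k[W]^G$; hence $w''$ and $\mu w$ lie in the same $G$-orbit, so $[w'] \in G \cdot [w]$, a finite set. On the boundary $\bbP(V) = \{z = 0\}$, a direct comparison of the $z^q$-coordinate shows that fibers remain in $\bbP(V)$, and the restriction $\psi_q|_{\bbP(V)}$ is given by the complete linear system $k[V]^G_q$; for $q$ prime and sufficiently large, this corresponds to a very ample line bundle on $\operatorname{Proj} k[V]^G = \bbP(V)/G$, defining an embedding whose fibers are finite $G$-orbits.

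The main obstacle is part (b): the subring $k[k[W]^G_m] \subset k[W]^G$ is in general strictly smaller than $k[W]^G$, and one must argue that the ``missing'' high-degree invariants are recovered in the field of fractions via multiplication by a suitable power of $z$. This crucially uses the degree-$1$ invariant supplied by the trivial summand together with Noether's bound on degrees of generators of $k[V]^G$. Part (c) requires additional care at the boundary $\bbP(V)$, where the $z$-trick is unavailable; this is precisely where the choice of $q$ prime and large enters, via very ampleness of the line bundle on the projective quotient.
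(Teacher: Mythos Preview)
Parts (a), (b), and the $z\neq 0$ case of (c) are correct and broadly parallel the paper's approach. Your treatment of (b) is more laborious than necessary: rather than the monomial identity, the paper simply restricts $\psi_m$ to the affine chart $V=\{z=1\}\subset\bbP(W)$, where the map is given by a basis of $k[V]^G_{\leq m}$; Noether's bound then shows these generate $k[V]^G$, so $\psi_m^*k(Y)\supseteq k(V)^G=k(\bbP(W))^G$ directly. Your affine argument in (c) is a cleaner variant of the paper's.

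The genuine gap is the boundary case $\{z=0\}$ of (c). Your assertion that for large prime $q$ the system $k[V]^G_q$ ``corresponds to a very ample line bundle on $\operatorname{Proj} k[V]^G$'' is neither justified nor, as phrased, correct. On $\operatorname{Proj} k[V]^G$ the sheaf $\mathcal{O}(q)$ need not be invertible when $q$ is prime: already for $G=\bbZ/2\bbZ$ acting on $k^3$ by a single reflection one gets $\bbP(1,1,2)$, where $\mathcal{O}(q)$ is Cartier only for even $q$. Standard ampleness theorems give very ampleness for sufficiently \emph{divisible} $q$, not for large primes; and for small primes the desired conclusion can fail outright (for the dihedral group of order $6$ on $k^2$, $k[V]^G_7$ is one-dimensional, so the induced map is constant).

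Even if one asks only for finite fibers rather than an embedding, this is precisely the nontrivial content of (c), and the paper devotes real work to it. Fixing homogeneous generators $g_1,\dots,g_r$ of $k[V]^G$ of degrees $d_i\leq |G|$, the paper first uses primality of $q$ together with Lemma~\ref{lem.invariant1} to show that the degrees $\{d_i:g_i(v)\neq 0\}$ are coprime for $v\in V\setminus B$; then a Frobenius-number argument shows that for $q$ exceeding an explicit bound the solutions of $\sum a_id_i=q$ span $\bbQ^s$, so the degree-$q$ monomial values $g_1^{a_1}(w)\cdots g_r^{a_r}(w)$ pin down each $g_i(w)$ up to finitely many choices. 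Your one-line appeal to very ampleness does not substitute for this Diophantine analysis.
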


\begin{proof} 
(a) We may assume without loss of generality that
$k$ is algebraically closed.
Since $z^m \in k[W]^G_m$, we see that the indeterminacy 
locus of $\psi_m$ consists of points $(v: a) \in \bbP(W)$
with $a = 0$ and $f(v) = 0$ for every $f \in k[V]^G_m$, where
$k[V]^G_m$ denotes the $k$-vector space of $G$-invariant homogeneous 
polynomials on $V$ of degree $m$. By Lemma~\ref{lem.invariant1},
$v \in B$. Thus 
$(v:a) \in \bbP(B) \subset \bbP(V \times \{ 0 \}) \subset \bbP(W)$, as
claimed.

(b) To show that the natural inclusion
$\psi_m^* \colon k(Y) \hookrightarrow k(\bbP(W))^G$ of fields
is an isomorphism, we restrict  
$\psi_m$ to the dense open subset $V \subset \bbP(W)$
given by $z \neq 0$. This restriction is the morphism 
\[ \begin{array}{c} V \to \bbA^{N} \\
v \mapsto (f_1(v), \dots, f_{N}(v)) \, , 
\end{array} \] 
where $f_1, \dots, f_{N}$ form a basis of the vector space 
$k[V]^G_{\leqslant m}$ of $G$-invariant polynomials of degree $\leqslant m$.
Consequently, $f_1, \dots, f_{N} \in \psi_m^* \, k(Y)$.
By the Noether bound $k[V]^G$ is generated by polynomials of degree
$\leqslant |G|$ as a $k$-algebra; see Remark~\ref{rem.noether} below.
Since $|G| \leqslant m$, we conclude that
$\psi_m^* \, k(Y)$ contains $k[V]^G$ and thus its fraction field
$k(V)^G$. Since $V$ is a $G$-invariant dense open subset of $\bbP(W)$, we
have $k(V) = k(\bbP(W))$. Therefore, 
$\psi_m^* \, k(Y) \supset k(V)^G = k(\bbP(W))^G$, as desired.

\smallskip
(c) Suppose $v \in V \subset \bbP(W)$, i.e., $z(v) \ne 0$.
The argument of part (b) shows that in this case $w$ lies 
in the same fiber of $\psi$ as $v$ if and only if $w \in V$ and
$f(v) = f(w)$ for every $f \in k[V]^G$. Since elements of $k[V]^G$
separate the $G$-orbits in $V$, this shows that the fibers of $\psi_q$ 
in $V$ are precisely the $G$-orbits in $V$, and hence, are finite.

We may thus restrict $\psi_q$ to $\bbP(V) \subset \bbP(W)$, where $z = 0$.
That is, it suffices to show that if $q$ is a large enough prime,
every fiber of the morphism
$\psi_q \colon \bbP(V \setminus B) \to \bbP^N$ is finite. 
Equivalently, it suffices to show that every fiber of 
the morphism 
\[ \phi_q \colon V \setminus B \to \mathbb A(k[V]^G_{q}) \] 
given by the linear system $k[V]^G_{q}$ 
of $G$-invariant polynomials of degree $q$, is finite.
In particular, we may assume without loss of generality that
$B \subsetneq V$.

Choose homogeneous generators $g_1, \dots, g_r$ for
$k[V]^G$ and fix them for the rest of the proof. 
Denote their degrees by $d_1, \dots, d_r$, respectively.
By the Noether bound we may assume that $d_1, \dots, d_r \leqslant |G|$.

Let $\Lambda_{d_1, \dots, d_r}^q \subset \bbZ_{\ge 0}^r$ 
be the set of non-negative integers solutions $(a_1, \dots, a_r)$ 
of the linear Diophantine equation
\[ a_1 d_1 + \dots + a_r d_r = q.  \]
Then the polynomials $g_1^{a_1} \dots g_r^{a_r}$ 
span $k[V]_{q}^G$, as $(a_1, \dots, a_r)$ ranges 
over $\Lambda_{d_1, \dots, d_r}^q$.
In other words $\phi_q(v) = \phi_q(w)$ if and only if
$g_1^{a_1}(v) \dots g_r^{a_r}(v) = g_1^{a_1}(w) \dots g_r^{a_r}(w)$ 
for every $(a_1,\dots, a_r) \in \Lambda_{d_1, \dots, d_r}^q$.

Let us now fix $v \in V \setminus B$ and consider 
$w \in V \setminus B$ such that $\phi_q(w) = \phi_q(v)$.
Our ultimate goal is to show that, if $q$ is a large enough prime,
there are only finitely many such $w$.
After renumbering $g_1, \dots, g_r$, we may assume that $g_1(v), \dots, g_s(v)
\neq 0$ but $g_{s+1}(v) = \dots = g_r(v) = 0$.

\smallskip
Claim 1: $d_1, \dots, d_s$ are relatively prime. 

\smallskip
Indeed, assume the contrary: $\gcd(d_1, \dots, d_s) \geqslant 2$.
Choose a prime $q > |G|$.
Since $v \not \in B$, Lemma~\ref{lem.invariant1} tells us that
there exists an $f \in k[V]_q^G$ such that $f(v) \neq 0$.
Since $f$ is a polynomial in $g_1, \dots, g_r$, some monomial
$g_1^{a_1} \cdot \dots \cdot g_r^{a_r}$ of total degree 
$a_1 d_1 + \dots + a_r d_r = q$ does not vanish at $v$. 
After replacing $f$ by this monomial, we may assume that
$f = g_1^{a_1} \cdot \dots \cdot g_r^{a_r}$.
Note that $a_j \ge 1$ for some $j \ge s+1, \dots, r$. Otherwise
$q$ would be divisible by $\gcd(d_1, \dots, d_s)$, which 
is not possible, because $q$ is a prime and
$q > |G| \geqslant d_1, \dots, d_s \geqslant
\gcd(d_1, \dots, d_s) \geqslant 2$.
Since $g_j(v) = 0$, we conclude that
$f(v) = g_1^{a_1}(v) \cdot \dots \cdot g_r^{a_r}(v)  = 0$, a contradiction.
This completes the proof of Claim 1.

\smallskip
It is well known that if $d_1, \dots, d_s \geqslant 1$ are relatively 
prime integers then for large enough integers $q$ (not necessarily prime),
 $\Lambda_{d_1, \dots, d_s}^q \ne \emptyset$. 
The largest integer $q \geqslant 0$ such 
that $\Lambda_{d_1, \dots, d_s}^q = \emptyset$ is
called {\em the Frobenius number}; we will denote it by $F(d_1, \dots, d_s)$. 
This number has been extensively studied; for an explicit upper 
bound on $F$ in terms of $d_1, \dots, d_s$, see, e.g.,~\cite{erdos-graham}.

\smallskip
Claim 2: Suppose our prime $q$ is 
$> |G| + F(d_1, \dots, d_s) + d_1 + \dots + d_s$. Then

\noindent
(i) $g_i(w) \ne 0$ for every $i = 1, \dots, s$ and
(ii) $g_j(w) = 0$ for every $j = s+1, \dots, r$.

\smallskip
To prove (i), note that since $q - d_1 - \dots - d_s > F(d_1, \dots, d_s)$,
there is an $s$-tuple 
$(a_1, \dots, a_s)$ of non-negative integers such that
$a_1 d_1 + \dots + a_s d_s = q - d_1 - \dots - d_s$. Thus
the polynomial $P := g_1^{a_1 + 1} \cdot \ldots \cdot g_s^{a_s + 1}$ lies in
$k[V]_q^G$. By our assumption, $P(w) = P(v) \neq 0$. Hence,
$g_i(w) \neq 0$ for any $i = 1, \dots, s$.

To prove (ii), choose $j$ between $s+1$ and $r$. Since
$q > |G| + F(d_1, \dots, d_s) \geqslant d_j + F(d_1, \dots, d_s)$, 
there is an $s$-tuple 
$(b_1, \dots, b_s)$ of non-negative integers such that
$b_1 d_1 + \dots + b_s d_s = q - d_j$. Now
the polynomial $Q := g_1^{b_1} \cdot \ldots \cdot g_s^{b_s} g_j$ lies in
$k[V]_q^G$. Since $g_j(v) = 0$, we have $Q(w) = Q(v) = 0$.
By (i), $Q(w) = 0$ is only possible if $g_j(w) = 0$.
This completes the proof of Claim 2.

\smallskip
Claim 3.  There exists an $q_0 > 0$ such that for any integer
$q \ge q_0$ (not necessarily a prime), 
the set $\Lambda^q_{d_1, \dots, d_s}$ spans $\bbQ^s$ as a $\bbQ$-vector space.

\smallskip
To prove Claim 3, choose an integer basis 
$\vec{z}_1, \dots \vec{z}_{s-1} \in \bbZ^s$ for the $\bbQ$-vector space of
solutions of the homogeneous linear equation 
$a_1 d_1 + \dots + a_s d_s = 0$.  Denote the maximal absolute value 
of the coordinates of $\vec{z}_1, \dots \vec{z}_{s-1}$ 
by $M$ and set $q_0 := F(d_1, \dots, d_s) + (d_1 + \dots + d_s) M$.

For every $q > q_0$ we will construct 
an $\vec{a} = (a_1, \dots, a_s) \in \Lambda^q_{d_1, \dots, d_s}$ such 
that $a_i \geqslant M$ for every $i$.
Indeed, since $q - (d_1 + \dots + d_s) M > F$ there are non-negative
$b_1, \dots, b_s$ such that 
$b_1 d_1 +  \dots +  b_s d_s = q - (d_1 + \dots + d_s) M$.
We can now take $\vec{a} := (b_1 + M, \dots, b_s + M)$.

Finally,  for $q > q_0$, the $s$ integer vectors
\[ \vec{a}, \vec{a} + \vec{z}_1, \dots, \vec{a} + \vec{z}_{s-1} \]
lie in $\Lambda_{d_1, \dots, d_s}^q$ and are linearly independent.
This completes the proof of Claim 3.

\smallskip
Suppose $q$ is a prime, large enough to satisfy 
the assumptions of Claims 2 and 3.
We are now in a position to show that for any $v \in V \setminus B$,
there are only finitely many $w \in V \setminus B$ such that
$\phi_q(v) = \phi_q(w)$. By Claim 3, there exist $s$ linearly independent 
vectors
$(a_{11}, \dots, a_{1s}), \dots, (a_{s1}, \dots, a_{ss})$ in
$\Lambda_{d_1, \dots, d_s}^q$. Thus 
\[ \left\{ \begin{matrix} 
g_1(w)^{a_{11}} \dots g_s(w)^{a_{1s}} = g_1(v)^{a_{11}} \dots g_s(v)^{a_{1s}},\\
g_1(w)^{a_{21}} \dots g_s(w)^{a_{2s}} = g_1(v)^{a_{21}} \dots g_s(v)^{a_{2s}},\\
\vdots \\
g_1(w)^{a_{s1}} \dots g_s(w)^{a_{ss}} = 
g_1(v)^{a_{s1}} \dots g_s(v)^{a_{ss}},  
\end{matrix} \right. \]
where the elements on the right hand side are non-zero.
We view $v$ as fixed and allow $w$ to range over the fiber of $\phi(v)$.
The matrix $A := (a_{ij})$ is invertible and $\det(A) \cdot  A^{-1}$ has
integer entries.  Thus, we can solve the above system
for $g_1^{\det(A)}(w), \dots, g_s^{\det A}(w)$. 

In conclusion, as $w$ ranges over the fiber of $\phi_q(v)$, we see
that $g_{s+1}(w) = \dots = g_r(w) = 0$ (by Claim 2) and 
$g_{1}(w) = \dots = g_s(w)$ assume only finitely many values.
Thus $w$ can only lie in finitely many $G$-orbits, as desired. 
\end{proof}

\begin{remark} \label{rem.noether} 
E.~Noether showed that $k[V]^G$ is generated by polynomials of degree
$\leqslant |G|$ as a $k$-algebra under the assumption that $\Char(k) = 0$.
The more general variant of the Noether bound used in the proof of
Proposition~\ref{prop.invariant3} (where $\Char(k) > 0$ is allowed, 
as long as $\Char(k)$ does not divide $|G|$)
is due to P.~Fleischmann, J.~Fogarty, and D.~Benson. For details and 
further references, see~\cite[Section 3.8]{derksen-kemper}.
\end{remark}

\section{Proof of Theorem~\ref{thm.horizontal2}(a)}

Set $d := \dim(B) = \max_{p} \, a(p)$.
Our goal is to construct a $d$-dimensional irreducible faithful
$G$-variety $X_d$ which is $p$-versal for every prime $p$.
This would imply $\pmed(G) \leqslant \dim(X_d) = d$, 
as desired.  

If $|G| = 1$ (or, equivalently, $d = 0$), we can 
take $X_d$ to be a point. Thus, from now on, we will 
assume that $G$ is non-trivial or, equivalently, $d \ge 1$.

Choose a sufficiently large prime integer $q$ so that 
$q \ne \Char(k)$, and every part of Lemma~\ref{prop.invariant3} 
holds; in particular, we will assume $q > |G|$.
This prime will remain fixed throughout the proof.
For notational simplicity we will write
$\psi \colon  \bbP(W) \dasharrow Y \subset \bbP^N$ for the rational 
map given by the linear system $k[W]_q^G$ of $G$-invariant
homogeneous polynomials of degree $q$, instead of $\psi_q$.
By part (a) of Lemma~\ref{prop.invariant3},
$\psi$ is regular away from $B$, and by part (b), $\psi$
is generically a $G$-torsor.  

Let $Y_n$ be a dense open 
subset of $Y$ such that $\psi \colon X_n \to Y_n$ is a $G$-torsor 
(and in particular, \'etale). Here $X_n$ is the preimage of $Y_n$ in
$\bbP(W \setminus B)$. The subscript $n$ in $X_n$ and $Y_n$ is intended
to remind us that $\dim(X_n)= \dim(Y_n) = n$, where
$n = \dim(V)= \dim(\bbP(W))$, as before. 
The idea of our construction of $X_d$ is to start with a $G$-invariant 
open subset $X_n$ of $\bbP(W \setminus B)$ and to construct successive
hyperplane sections $X_{n-1}, \dots, X_d$ recursively by appealing to 
Bertini's Theorem~\ref{thm.bertini}.

If $n = d$ then we are done. Indeed, our variety $X_n$ is $G$-equivariantly
birationally isomorphic to a vector space $V$, with a faithful linear
$G$-action. Hence, $X_n$ is versal, and, in particular, $p$-versal 
for every prime $p$.  Therefore, we may assume without loss of generality
that $n \geqslant d + 1 \geqslant 2$.

Since $X_n$ is birationally isomorphic to $V$, there exists an $F$-point
$x \in X_n(F)$, where $F/k$ is a finite separable field extension 
of degree prime to $q$. In fact, such points are dense in $X_n$.
Note that if $k$ is infinite, we can take $F = k$. 

By Theorem~\ref{thm.bertini} for sufficiently large $s_1$
there is a homogeneous polynomial $f \in k[y_0, \dots, y_N]$ 
of degree $q^{s_1}$ such that 

\smallskip
(i) $\Yfnote{(X_n)}{f_1}$ is geometrically irreducible, 

\smallskip
(ii) $\Yfnote{(Y_n)}{f_1}$ is smooth,

\smallskip
(iii) $\psi(x) \in \Yfnote{(Y_n)}{f_1}$,

\smallskip
(iv) $\dim(\Yfnote{(X_n)}{f_1}) = \dim(X_n)-1$.

\smallskip
\noindent
Here $y_0, \dots, y_N$ denote homogeneous coordinates on $\bbP^N$.

We now set $X_{n-1}:= \Yfnote{(X_n)}{f_1}$, $Y_{n-1}:= \Yfnote{(Y_n)}{f_1}$ and proceed 
to construct $Y_{n-2}, \dots, Y_{n-d}$ and $X_{n-2}, \dots, X_d$
recursively, where each $X_{n-i}$ is the preimage of $Y_{n-i}$ in
$\bbP(W \setminus B)$ under $\psi$, each $X_{n-i}$ is irreducible,
each $Y_{n-i}$ (and hence, $X_{n-i}$) is smooth of dimension $n-i$, 
each $Y_{n-i}$ contains $\psi(x)$, and each $Y_{n-i-1}$ is obtained
by intersecting $Y_{n-i}$ with a hypersurface $f_{i} = 0$ in $\bbP^N$, 
for a homogeneous polynomial $f_i \in k[y_0, \dots, y_N]$ of degree $q^{s_i}$. 

Note that since $\psi$ is given by the linear system of $k[V]_q^G$
of homogeneous $G$-invariant polynomials of degree $q$, 
$f_i$ lifts to a homogeneous polynomial $\psi^*(f_i)$
of degree $q^{s_1 + 1}$ on $\bbP(W)$. In other words,
\begin{equation} \label{e.hypersurfaces}
 X_d = (H[1] \cap \dots \cap H[n-d]) \setminus (\bbP(B) \cup 
\psi^{-1}(\overline{Y}_d \setminus Y_d) ) , 
\end{equation}
where $\overline{Y}_d$ is the closure of $Y_d$ in $\bbP^N$ and
$H[i]$ is a hypersurface of degree $q^{s_i + 1}$ in $\bbP(W)$
cut out by $\psi^*(f_i)$.

\smallskip
Since each $\psi \colon X_{n-i} \to Y_{n-i}$ is a $G$-torsor,
the $G$-action on $X_d$ is faithful. Thus
it remains to show that the $G$-action on $X_d$ is $p$-versal 
for every prime $p$. 

\smallskip
Case 1: $p = q$. Recall that the $G$-action on $X_d$ is $p$-versal 
if and only the $G_p$-action on $X_d$ is $p$-versal, where $G_p$ is 
a Sylow $p$-subgroup of $G$; see~\cite[Corollary 8.6]{versal}.
Since $q > |G|$, we have $G_q = \{ 1 \}$.
Thus in order to show that $X_d$ is $q$-versal it suffices to show that
$X_d$ has a $0$-cycle of degree prime to $q$; 
see~\cite[Lemma 8.2 and Theorem 8.3]{versal}. By our construction
$Y_d$ contains $\psi(x)$ and hence, $X_d$ contains $x$, where
$x$ is a point of degree prime to $q$. This shows that $X_d$ is $q$-versal. 

\smallskip
Case 2: $p \ne q$.  To show that the $G$-action on $X_d$ is $p$-versal 
it suffices to prove that for every field extension $K/k$,
with $K$ infinite, and every
$G$-torsor $T \to \Spec(K)$, the twisted $K$-variety 
$\, ^T X_d $ contains a $0$-cycle $Z$, whose degree over $K$ is a power of $q$
(and thus prime to $p$); see~\cite[Section 8]{versal}. 

Since the $G$-action on $\bbP(W)$ lifts to
a linear $G$-action on $W$, Hilbert's Theorem 90 tells us that
$\, ^T \bbP(W) = \bbP(W_K)$ is a projective space over $K$; see, e.g.,
\cite[Lemma 10.1]{versal}.
Twisting both sides of~\eqref{e.hypersurfaces} by $T$, we obtain
\[ ^T X_d = (\, ^T H[1] \cap \dots \cap { }^T H[n-d]) \setminus
(\, ^T \bbP(B) \cup \, { } ^T \psi^{-1}(\overline{Y}_d \setminus Y_d) )  \] 
in $\bbP(W_K)$.  We will construct the desired zero cycle $Z$ on
${ }^T X_d$ by intersecting ${ }^T X_d$ with 
$d$ hyperplanes $M_1, \dots, M_d$ in $\bbP(W_K)$
in general position. Note that since $Y_d$ is irreducible,
Lemma~\ref{prop.invariant3}(c) tells us that
\[ \dim_k \psi^{-1}(\overline{Y}_d \setminus Y_d) \leqslant 
\dim_k (\overline{Y}_d \setminus Y_d) \leqslant
\dim_k (Y_d) - 1 = d - 1  \, . \]
Since $\dim_k (\bbP(B)) = \dim_k (B) - 1 = d-1$, we see that
a linear subspace $M = M_1 \cap \dots \cap M_d$ 
of codimension $d$ in $\bbP(W_K)$
in general position misses both ${}^T \bbP(B)$ and 
${ }^T \psi^{-1}(\overline{Y}_d \setminus Y_d)$.

Let $Z$ be the intersection cycle obtained by intersecting $\, ^T X_d$ 
with $M$.  By~\cite[Lemma 10.1(c)]{versal}, 
each ${}^T H[i]$ is a hypersurface of degree $q^{s_i + 1}$
in $\bbP(W_K)$.
Hence, by Bezout's theorem~\cite[Proposition 8.4]{fulton},
\begin{align*}
\deg_K(Z) = & \deg(\, ^T H[1]) \cdot \ldots \cdot \deg(\, ^T H[n-d]) 
\cdot \deg(M_1) \cdot \ldots \cdot \deg(M_d) \\ 
   &= q^{s_1 + 1} \cdot  \ldots \cdot q^{s_{n-d} + 1} \cdot  
\underbrace{1 \cdot \ldots \cdot 1}_{\text{$d$ times}}
\end{align*}
is a power of $q$, as desired.
\qed

\section{$A$-groups} \label{sec:Agroups}

Let $G$ be a finite group, $p$ be a prime and $G_p$ be a Sylow $p$-subgroup
of $G$. Recall that $G$ is called an \emph{$A$-group} if $G_p$ is abelian 
for every $p$; see, e.g., \cite{ito,walter,broshi}. For the rest of this
section, with the exception of Conjecture~\ref{conj.elephant} below,
we will assume that the base field $k$ is of characteristic zero 
and $\zeta_e \in k$, where $e$ is the exponent of $G$.  

\begin{prop} \label{prop.Sylow-abelian}
Let $G$ be an $A$-group.  Then
\[ \pmed(G) = \max_p \, \ed(G; p) = \max_p \, \rank(G_p)  \] 
where the maximum is taken over all primes $p$.
\end{prop}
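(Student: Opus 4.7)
The first equality $\max_p \ed(G;p) = \max_p \rank(G_p)$ follows from the Karpenko-Merkurjev theorem~\eqref{e.km}: since $G_p$ is abelian and $\zeta_{|G_p|} \in k$, writing $G_p \cong \prod_{i=1}^{r_p} \bbZ/p^{a_i}\bbZ$ and summing one faithful character of each cyclic factor produces a faithful representation of dimension $r_p := \rank(G_p)$, so $\rdim(G_p) = r_p$ and therefore $\ed(G;p) = r_p$. The inequality $\max_p \ed(G;p) \le \pmed(G)$ is part of~\eqref{e.obvious}.

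To prove the reverse inequality $\pmed(G) \le \max_p r_p$, the plan is to invoke Theorem~\ref{thm.horizontal2}(a). This reduces the problem to exhibiting a faithful representation $\phi \colon G \hookrightarrow \GL(V)$ with $a_\phi(p) \le r_p$ for every prime $p$ dividing $|G|$. Since $G_p$ is abelian and $\zeta_{|G_p|} \in k$, $V|_{G_p}$ decomposes as a direct sum of one-dimensional characters $\chi \colon G_p \to k^\times$ with multiplicities $m_\chi$, and by Clifford theory each irreducible $G$-constituent of $V$ contributes a complete Galois orbit of characters with common multiplicity. Every element of $G$ of order $p$ is conjugate to an element of $G_p$, so $a_\phi(p) = \max_{g \in G_p,\,\mathrm{ord}(g) = p} \sum_{\chi(g) = \zeta_p} m_\chi$, and the goal is to arrange this sum to be at most $r_p$.

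My strategy for constructing $\phi$ is inductive on $|G|$: take a minimal normal subgroup $N \triangleleft G$, which is elementary abelian of some prime exponent $p_0$ because $G$ is solvable (being an $A$-group, by Taunt's theorem). Apply the inductive hypothesis to $G/N$ (again an $A$-group), pull back to get a partial representation $\psi$ of $G$, and then adjoin finitely many irreducible constituents of $\Ind_N^G \chi$ for suitable characters $\chi$ of $N$ to make the sum faithful on $N$. More explicitly, for each prime $p \mid |G|$, I would pick $r_p$ characters $\chi_1^{(p)}, \dots, \chi_{r_p}^{(p)}$ of $G_p$ which together define a faithful $r_p$-dimensional representation of $G_p$, and include in $V$ one irreducible $G$-constituent of $\Ind_{G_p}^G \chi_i^{(p)}$ for each $i$. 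By Frobenius reciprocity, the resulting $V$ is faithful on every Sylow subgroup, hence on $G$.

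The main obstacle is verifying the bound $a_\phi(p) \le r_p$ simultaneously for all primes $p$: an irreducible summand added to ensure faithfulness at one prime $q$ can, upon restriction to a different Sylow $G_p$, contribute additional characters of $G_p$ with $\chi(g) = \zeta_p$ for some order-$p$ element $g$, potentially inflating $a_\phi(p)$ beyond $r_p$. Overcoming this is where the $A$-group hypothesis enters essentially: because every Sylow is abelian, $G_p \subseteq C_G(g)$ for every $g \in G_p$, so Mackey's double coset formula, combined with the description of Clifford orbits, gives a tight count of which characters of $G_p$ appear in the restriction of each induced representation and with what multiplicity. I expect this combinatorial bookkeeping—choosing the set of irreducible summands of $V$ minimally so as to saturate but not exceed the bound $r_p$ at each prime—to be the hardest step of the proof, with the abelian structure of all Sylows providing just enough control to push it through.
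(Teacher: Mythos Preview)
Your approach has a genuine gap: the representation $\phi$ you are trying to construct need not exist. The paper's own Example~\ref{ex.semidirect} exhibits the $A$-group $G = \bbZ/5\bbZ \rtimes \bbZ/4\bbZ$ (with $\bbZ/4\bbZ$ acting faithfully), whose Sylow subgroups $\bbZ/5\bbZ$ and $\bbZ/4\bbZ$ both have rank $1$, so $\max_p r_p = 1$; yet part~(b) of that example shows that \emph{every} faithful representation $\phi$ of $G$ satisfies $a_\phi(2) \ge 2$. Hence Theorem~\ref{thm.horizontal2}(a) can at best give $\pmed(G) \le 2$ for this group, not the required bound $\pmed(G) \le 1$. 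The obstacle you anticipated---summands needed for faithfulness at the prime $5$ inflating $a_\phi(2)$---is not just bookkeeping but an actual obstruction: the unique faithful irreducible $\phi_4$ already has $a_{\phi_4}(2) = 2$, and any faithful representation must contain it. So the route through Theorem~\ref{thm.horizontal2}(a) cannot establish the proposition in general.

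There is also a side error: $A$-groups are not all solvable. For instance $\Alt_5 \cong \PSL_2(5)$ has abelian Sylow subgroups (indeed the paper's Remark following Proposition~\ref{prop.Sylow-abelian} discusses non-abelian simple $A$-groups), so the induction scheme via a minimal normal elementary-abelian subgroup breaks down even before the eigenspace counting.

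The paper's proof avoids representations entirely. It invokes \cite[Theorem~8.6]{ry} to produce directly a $d$-dimensional faithful primitive $G$-variety $Y$ carrying smooth $k$-points $y_i$ with $G_{p_i} \subset \Stab_G(y_i)$ for each prime $p_i \mid |G|$; the $G$-orbit of $y_i$ is then a $0$-cycle of degree prime to $p_i$, and the $p$-versality criterion of \cite[Corollary~8.6]{versal} yields $\pmed(G) \le d$. The abelian-Sylow hypothesis enters only through the cited theorem from \cite{ry}.
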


Here, as usual, by the rank of a finite abelian group $H$ we mean
the minimal number of generators of $H$.  

\begin{proof} The second equality is well known;
see, e.g.,~\cite[Corollary 7.3]{ry0}. Note also that this is a very 
special case of~\eqref{e.km}. In view of~\eqref{e.obvious},
in order to prove the first equality, we only need 
to show that $\pmed(G) \leqslant \max_p \, \rank(G_p)$. 

Let $p_1, \dots, p_r$ be the prime divisors of $|G|$ and
$d = \max \, \rank(G_{p_i})$, as $i$ ranges from $1$ to $r$.  
By \cite[Theorem 8.6]{ry} there exists a faithful
primitive $d$-dimensional $G$-variety $Y$ with smooth $k$-points 
$y_1, \dots, y_r$ such that $G_{p_i} \subset \Stab_G(y_i)$ 
for $i = 1, \dots, r$. 

Recall that ``primitive" means that $G$ transitively permutes 
the irreducible components of $Y_{\bar{k}}$.  We claim that any such
$Y$ is, in fact, absolutely irreducible. Let us assume this claim for 
a moment. 
The $G$-orbit of $y_i$ is a zero cycle of degree prime to $p_i$.
Thus for any given prime $p$, the degree of one of these orbits is prime 
to $p$.  By~\cite[Corollary 8.6(b)]{versal}, this implies that
$Y$ is $p$-versal for every $p$. Hence, $\pmed(G) \le \dim(Y) = d$, 
and the proposition follows.

It remains to show that $Y$ is absolutely irreducible.
After replacing $k$ by its algebraic closure $\bar{k}$, we may
assume that $k$ is algebraically closed.  Let $Y_0$ be an irreducible 
component of $Y$ and $H$ be the stabilizer of $Y_0$ in $G$.
Our goal is to prove that $H = G$. Since $G$ acts transitively on 
the irreducible components of $Y$, this will imply that $Y = Y_0$.

Since $y_i$ is a smooth point of $Y$, it lies on exactly one
irreducible component of $Y$, say on $g_i(Y_0)$ for some $g_i \in G$.
Since $y_i$ is $G_{p_i}$-invariant, $y_i$ also lies on $g g_i(Y_0)$ for every
$g \in G_{p_i}$. In other words, $g g_i (Y_0) = g_i(Y_0)$ for every 
$g \in G_{p_i}$
or equivalently, $g_i^{-1} G_{p_i} g_i \subset H$ for every $i =  1, \dots, s$.
This shows that $H$ contains a Sylow $p_i$-subgroup of $G$ for 
$i = 1, \dots, r$. Hence, $|H|$ is divisible by 
$|G_{p_i}|$ for every $i = 1, \dots, r$. We conclude that
$|H|$ is divisible by $|G| = |G_{p_1}| \cdot \dots \cdot |G_{p_s}|$
and hence, $H = G$.
\end{proof}

\begin{remark} \label{rem.base-field2}
The above argument relies, in a key way, on~\cite[Theorem 8.6]{ry}. 
This theorem is proved in~\cite{ry} over an algebraically closed 
field of characteristic $0$ but the proof goes through for any
$k$ as above. The condition that $\zeta_e \in k$, is necessary; 
it is not mentioned in~\cite[Remark 9.9]{ry} due to an oversight.
\end{remark}

\begin{example} \label{ex.pmed1}
If $G$ is a non-abelian group of order $pq$, where $p$ and $q$ 
are odd primes. Then Proposition~\ref{prop.Sylow-abelian}
tells us that $\pmed(G) = 1$. On the other hand, $\ed(G) \geqslant 2$;
see~\cite[Theorem 6.2]{bur}. This is, perhaps, the simplest 
example where $\pmed(G) < \ed(G)$.
\end{example}

\begin{remark}
Non-abelian simple $A$-groups  are classified in \cite[Theorem 3.2]{broshi}:
they are $J_1$, the first Janko group, and $\PSL_2(q)$ for $q > 3$ and
$q \equiv 0$, $3$, or $5 \pmod{8}$.
By Proposition~\ref{prop.Sylow-abelian},
\[ \pmed(G) = \begin{cases} \text{$3$, if $G \simeq J_1$,} \\
\text{$2$, if $G \simeq  \PSL_2(q)$, with $q$ as above.}
\end{cases} \]
On the other hand, by~\cite{beauville}, $\ed(G) \geqslant 4$ for 
any of these groups,
except for $G \simeq \PSL_2(5)$ and (possibly) $\PSL_2(11)$.
\end{remark}

It is natural to conjecture the following generalization 
of~\cite[Theorem 8.6]{ry}. 

\begin{conj} \label{conj.elephant}
Let $d$ be a positive integer.
Suppose $G$ is a finite group with subgroups $H_1, \dots, H_r$
such that $\rdim_k(H_i) \le d$ for all $i = 1, \ldots, r$.
Then there exists a $d$-dimensional $k$-variety $X$ 
with a faithful $G$-action and smooth $k$-points $x_1, \dots, x_r \in X$
such that $H_i$ fixes $x_i$ for each $i = 1, \dots, r$.
\end{conj}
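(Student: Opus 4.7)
The plan is to construct $X$ by a Bertini-type cutting-down procedure, in the spirit of the proof of Theorem~\ref{thm.horizontal2}(a). First I would choose a large faithful linear $G$-representation $V$ over $k$ that contains, for each $i$, a nonzero $H_i$-fixed $k$-rational vector $v_i$. A natural choice is
\[
V = U \oplus \bigoplus_{i=1}^r k[G/H_i],
\]
where $U$ is any faithful $G$-representation over $k$ and $k[G/H_i] = \Ind_{H_i}^G k$ is the permutation representation; the coset $eH_i \in G/H_i$ gives a canonical $H_i$-fixed basis vector $v_i \in k[G/H_i]$, and the associated point $[v_i] \in \bbP(V)$ is a smooth $H_i$-fixed $k$-point.

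Next, I would iteratively cut $\bbP(V)$ with $G$-invariant hypersurfaces of large degree, using an equivariant Bertini argument of the type proved in Theorem~\ref{thm.bertini}. Each cut should pass through every $[v_i]$ with nonvanishing differential at $[v_i]$ (so that the cut is locally transverse there) and should also pass through the full $G$-orbit of a generic point $p \in \bbP(V)$ with trivial $G$-stabilizer. After $\dim \bbP(V) - d$ such cuts, the resulting subvariety $X$ is $G$-invariant, smooth of dimension $d$ at each $[v_i]$ and along $G \cdot p$, and the presence of the free $G$-orbit $G \cdot p$ guarantees that the $G$-action on $X$ is faithful.

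The main obstacle is verifying that the iterated cut can actually be carried out. At each stage one needs the linear system of $G$-invariant degree-$a$ polynomials vanishing at every prescribed point (the $[v_i]$'s together with $G \cdot p$) to contain a member whose differential at each $[v_i]$ is nonzero and which cuts the current subvariety smoothly away from these points. The condition at $[v_i]$ reduces to showing that the evaluation map $f \mapsto df|_{v_i}$ from this linear system into $(V^*)^{H_i}$ has large enough image; for $a \gg 0$ this should follow from the Noether bound together with the nontriviality of $(V^*)^{H_i}$, which in turn follows from $v_i \in V^{H_i}$. Global smoothness and irreducibility should come from a Jouanolou or Charles--Poonen style Bertini argument as in Section~\ref{sect.bertini}.

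The genuinely new point, which goes beyond \cite[Theorem~8.6]{ry}, is simultaneous control of the tangent space $T_{[v_i]}X$ at all $r$ fixed points when the $H_i$ are arbitrary (not just Sylow subgroups). The hypothesis $\rdim_k(H_i) \le d$ gives just enough room to fit a faithful $d$-dimensional $H_i$-representation inside $T_{[v_i]}X$, but coordinating these local constraints across all $i$ with a single global equivariant Bertini choice is where, I expect, the conjecture demands essentially new input.
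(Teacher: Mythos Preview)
The statement you are attempting to prove is labeled \textbf{Conjecture}~\ref{conj.elephant} in the paper, and the authors explicitly state that they ``have not been able to prove~\eqref{e.elephant} by this method beyond the case of $A$-groups.'' There is no proof in the paper to compare against; the authors present this as an open problem.

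Your outline follows the natural Bertini strategy of Theorem~\ref{thm.horizontal2}(a), and you correctly flag the endgame as the difficulty. Let me make the obstruction precise, because it is sharper than your last paragraph suggests. If $f$ is a $G$-invariant homogeneous polynomial vanishing at the $H_i$-fixed point $[v_i]$, then its differential $df|_{v_i}$ lies in $(V^*)^{H_i}$. Hence every $G$-invariant cut can only remove an $H_i$-\emph{trivial} direction from the tangent space at $[v_i]$; the entire nontrivial $H_i$-isotypic part of $T_{[v_i]}\bbP(V) \cong V/kv_i$ survives into $T_{[v_i]}X$. For $\dim X = d$ to be achievable, you therefore need
\[
\dim V - \dim V^{H_i} \le d \quad\text{for every } i,
\]
i.e.\ $V|_{H_i}$ must decompose as a trivial summand plus a faithful $H_i$-representation of dimension at most $d$. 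With your choice $V = U \oplus \bigoplus_j k[G/H_j]$ this fails badly: each $k[G/H_j]$ typically contributes many nontrivial $H_i$-summands, and so does $U$.

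So the problem is not merely one of coordinating local Bertini choices; it is that the $G$-invariant Bertini method is intrinsically incapable of shrinking the nontrivial part of the tangent representation at a fixed point. What the conjecture really asks for is a single faithful $G$-representation $V$ whose restriction to every $H_i$ is ``trivial plus (faithful of dimension $\le d$)''. For abelian $H_i$ (the $A$-group case handled by~\cite[Theorem~8.6]{ry}) such a $V$ can be built from characters, but for general $H_i$ no such construction is known, and this is exactly why the statement remains a conjecture.
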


Note that each $H_i$ must act faithfully on the tangent space of the
corresponding $x_i$ and so the condition that the representation dimension
of each $H_i$ should be $\le d$ is necessary.

Of particular interest is the special case where
$p_1, \dots, p_r$ are the distinct primes dividing $|G|$,
each $H_i$ is a Sylow $p_i$-subgroup, and $d$ is the maximum of
$\ed_k(G; p_i) = \rdim_k(H_i)$.
If Conjecture~\ref{conj.elephant}
could be established in this special case, then 
the argument we used in the proof of Proposition~\ref{prop.Sylow-abelian}
would show that the $G$-action on $X$ is $p$-versal for every prime $p$  
and, consequently, that~\eqref{e.elephant} holds for $G$.
We have not been able to prove~\eqref{e.elephant} by this method beyond 
the case of $A$-groups.

\section{Examples}
In this section we present two examples that complement
Theorem~\ref{thm.horizontal2}(b). 
Example~\ref{ex.Alt_n} shows that the inequality of 
Theorem~\ref{thm.horizontal2}(a) is in fact
an equality, for the natural $n$-dimensional
representation $V$ of the alternating group $\Alt_n$. Note that  
Theorem~\ref{thm.horizontal2}(b) cannot be applied to 
$\Alt_n \subset \GL(V)$, since $\Alt_n$ contains no pseudo-reflections. 
Nevertheless, the conclusion of Theorem~\ref{thm.horizontal2}(b) 
continues to hold 
in this case. On the other hand, Example~\ref{ex.semidirect}
shows that for $G = \bbZ/5\bbZ \rtimes \bbZ/4\bbZ$ the inequality 
of Theorem~\ref{thm.horizontal2}(a) is strict 
for every faithful representation $G \hookrightarrow \GL(V)$. 

\begin{example} \label{ex.Alt_n}
$\pmed(\Alt_n) = \ed(\Alt_n; 2) = 
2 \left\lfloor \dfrac{n}{4} \right\rfloor$ for any $n \geqslant 4$.
\end{example}

\begin{proof} 
Since $\Alt_n$ contains an elementary abelian subgroup of rank 
$2 \left\lfloor \dfrac{n}{4} \right\rfloor$ generated by
$(12)(34), (13)(24), (56)(78)$, etc., we have 
$\pmed(\Alt_n) \geqslant \ed(\Alt_n; 2) = 2 \left\lfloor 
\dfrac{n}{4} \right\rfloor$; see~\cite[Theorem 6.7(c)]{bur}.

We will now deduce the opposite inequality, 
\begin{equation} \label{e.alt}
\pmed(\Alt_n) \le 2 \left\lfloor \dfrac{n}{4} \right\rfloor
\end{equation}
from Theorem~\ref{thm.horizontal2}(a). Let $V = k^n$ 
be the natural representation of $\Sym_n$.
One checks that for any $g \in \Sym_n$ and any prime $p$,
the dimension of the eigenspace $V(g, \zeta_p)$ is the number 
of cycles of length divisible by $p$ in the cycle decomposition of $g$.
Thus 
\[ a(p) = \max_{g \in \Alt_n} \,  \dim \, V(g, \zeta_p) = 
\begin{cases} 
\text{$\left\lfloor n/p \right\rfloor$, if $p$ is odd, and} \\
\text{$2 \left\lfloor n/4 \right\rfloor$, if $p = 2$,}
\end{cases} \]
Since  we are assuming that $n \ge 4$,
the maximal value of $a(p)$ is attained at $p = 2$. The inequality
\eqref{e.alt} now follows from Theorem~\ref{thm.horizontal2}(a), as desired. 
\end{proof}

\begin{example} \label{ex.semidirect}
Let $G = \bbZ/5\bbZ \rtimes \bbZ/4\bbZ$, where $\bbZ/4\bbZ$ 
acts faithfully on $\bbZ/5\bbZ$. Assume $\zeta_{20} \in k$. Then

\smallskip
(a) $\pmed(G) = 1$, but

\smallskip
(b) $a_{\phi}(2) \geqslant 2$ for every faithful 
representation $\phi \colon G \hookrightarrow \GL(V)$.
\end{example}

\begin{proof} Since the Sylow subgroups of $G$ are
$\bbZ/5\bbZ$ and $\bbZ/4\bbZ$, part (a) follows from
Proposition~\ref{prop.Sylow-abelian}.

(b) Each of the four characters $\bbZ/4 \bbZ \to k^*$ 
induces a $1$-dimensional representation $G \to \GL_1$.
We will denote these representations by $\phi_0 = \id$,
$\phi_1$, $\phi_2$, and $\phi_3$. Let $\phi_4 = \Ind_{\bbZ/5 \bbZ}^G(\chi)$,
where $\chi$ is a non-trivial multiplicative character $\bbZ/5 \bbZ \to k^*$.
We see that $\phi_4$ is a faithful irreducible $4$-dimensional 
representation of $G$ (irreducibility follows, e.g, from Mackey's criterion) 
and $a_{\phi_4}(2) = 2$.  Since 
$\dim(\phi_0)^2 + \dots + \dim(\phi_4)^2 = 4 \cdot 1^2 + 4^2 = 20 = |G|$,
$\phi_0, \dots, \phi_4$ are the only irreducible representations of $G$. 
Moreover, since $\bbZ/5 \bbZ$
lies in the kernel of $\phi_0, \dots, \phi_3$, every faithful representation
$\phi \colon G \hookrightarrow \GL(V)$ must contain a copy of $\phi_4$.
Thus $a_{\phi}(2) \geqslant a_{\phi_4}(2) = 2$.
\end{proof}

\begin{remark} A.~Ledet showed that
$\ed(\bbZ/5\bbZ \rtimes \bbZ/4\bbZ) = 2$; see~\cite[p.~426]{ledet}.
Note that in~\cite{ledet} this group is denoted by $C_5$.
\end{remark}

\section*{acknowledgements}
The authors are grateful to I.~Dolgachev 
for suggesting the geometric construction used in the proof of
Lemma~\ref{lem:WE6} and B.~Poonen for a helpful discussion of
Bertini's theorem over finite fields and for sending us 
a draft version of his preprint~\cite{CPbertini}.
We would also like to thank 
M.~Garc{\'i}a-Armas,
G.~I.~Lehrer, 
R.~L\"otscher,
M.~MacDonald,
J.-P.~Serre,
and an anonymous referee
for helpful comments.

\end{document}